\newtheorem{assumpt}{Assumption}
\newtheorem{theorem}{Theorem}[section]
\newtheorem{proposition}{Proposition}[section]
\newtheorem{lemma}{Lemma}[section]
\title{Coexistence of stable limit cycles in a generalized Curie-Weiss model with dissipation}
\author{Luisa Andreis\thanks{WIAS-Weierstrass institute, Mohrenstr. 39, 10117 Berlin (Germany); e-mail address: luisa.andreis@wias-berlin.de} \and Daniele Tovazzi\thanks{Dipartimento di Matematica ``T. Levi-Civita'', Universit\`a degli Studi di Padova, Via Trieste 63, 35121 Padova (Italy); e-mail address: tovazzi@math.unipd.it}
%\thanks{Dipartimento di Matematica ``Tullio Levi Civita'', Universit\`a degli Studi di Padova, Via Trieste 63, 35121 Padova (Italy); e-mail address: tovazzi@math.unipd.it}
} 
\begin{document}

\maketitle

\begin{abstract}
\noindent In this paper, we modify the Langevin dynamics associated to the generalized Curie-Weiss model by introducing noisy and dissipative evolution in the interaction potential. We show that, when a zero-mean Gaussian is taken as single-site distribution, the dynamics in the thermodynamic limit can be described by a finite set of ODEs. Depending on the form of the interaction function, the system can have several phase transitions at different critical temperatures. Because of the dissipation effect, not only the magnetization of the systems displays a self-sustained periodic behavior at sufficiently low temperature, but, in certain regimes, any (finite) number of stable limit cycles can exist. We explore some of these peculiarities with  explicit examples.

\vspace{0.3cm}

\noindent {\bf Keywords:} Collective periodic behavior, Generalized Curie-Weiss model, Hopf bifurcation, Interacting diffusion, Li\'{e}nard systems, Mean field models, Phase transitions
\end{abstract}

%----------------------------------------------------------------------------------------
%	ARTICLE CONTENTS
%----------------------------------------------------------------------------------------
	
%\linenumbers

%================================================================================
\section{Introduction}
%================================================================================

Occurrence of rhythmic behaviors is one of the most interesting phenomena observed in complex systems emerging from life science. Since it is natural to model such systems by means of large families of interacting units, one question is how periodic behaviors emerge { at macroscopic level} when the single units have no tendency to behave periodically. In this framework, we say that we observe a \emph{self-sustained periodic behavior} if there is a phase in which the equation for the evolution of the macroscopic law has a stable periodic solution, without the action of any periodic force: { examples  of this phenomenon come from biology, ecology and socioeconomics \cite{Ch08,Er10,Tu92,We12}}. This periodicity, even if often detectable by numerical simulations, it is a peculiarity of the thermodynamic limit and it is quite hard to formally prove it, due to the infinite dimensional nature of the problem.

Some recent works~\cite{DaGiRe14, GiPo15} investigate the minimal hypothesis needed to create self-sustained periodic behavior in mean field interacting particle systems. One key step is to consider interactions that favor cooperation among units of the same type, but the reversibility of the dynamics seems to be in contrast with the occurrence of periodic behavior~\cite{BeGiPa10, GiPo15}. {Therefore, a number of different mechanisms that perturb classical reversible models have been proposed}. %In~\cite{} authors consider  one among the others the stochastic Kuramoto model. They propose different dynamics and interactions for which they prove periodicity with a very general method. In particular 
{For example,} the role of \emph{noise} appears to be crucial: it may induce periodicity in systems whose deterministic version do not admit periodic solutions, i.e. \emph{noise-induced} periodicity~\cite{DaGiRe14, Sch85noise,  ToHeFa12} or it enhances periodicity in dynamical systems already proved to have limit cycles, i.e. \emph{excitability by noise}~\cite{CoDaFo15, LiGaNeSc04}. Moreover, the add of {disorder} in the initial phase of each rotator for the Kuramoto and other active rotators models, see~\cite{AcBoViRiSp05, CoDa12, GiPo15}, gives origin to oscillating behavior of the stationary solutions. In multi-populated models, it is instead a suitable structure of the {interaction network}, with or without the add of a {delay}, that produces periodicity, see~\cite{CoFoTo16, DiLo16, Tou14}.

In this paper, {we focus on the mechanism that induces self-sustained periodic behavior by introducing a dissipation term in the interaction energy. In~\cite{DaFiRe13}}, the authors consider a dynamical Curie-Weiss model where each particle has its own \emph{local field} that undergoes a \emph{diffusive and dissipative} dynamics. %The particle system that they study is the  with a dissipative term %studied in~\cite{DaFiRe13, }, which already 
 In this case, due to the mean field interaction and of the spin-flip dynamics, it is possible to completely analyze the limiting system: the authors show that the introduction of dissipation prevents the system from relaxing to a magnetized equilibrium and this gives rise to a periodic orbit via a \emph{Hopf bifurcation}. 
The general approach to introduce dissipation in systems of interacting diffusions is presented in~\cite{CoDaFo15}, together with the study of limit cycles in the dissipative counterpart of a model of cooperative behavior introduced by Dawson~\cite{Da83}. Starting from these works, %it seems natural to test the role of dissipation in creating limit cycles on a more general class of systems. Indeed, 
a natural question is whether the add of a dissipative term { in a cooperative interaction} is a robust tool that is able to create stable periodic orbit in systems otherwise in equilibrium. In this view, we aim to define models showing additional features than the ones above, but still analytically  tractable. We consider the class of \emph{generalized Curie-Weiss models}, defined { in~\cite{ElNe78} and further studied in~\cite{EiEl88}}, being a straightforward extension of the classical Curie-Weiss Gibbs measure. It replaces the quadratic interaction function by a more general one and the spin-$\frac{1}{2}$ single site distribution by a symmetric distribution on $\mathbb{R}$. We define a dissipated version of the Langevin dynamics for this model and we analytically study the case where the single site distribution of spins is given by a Gaussian distribution. This case is of particular interest because the limiting behavior is given by a simple Gaussian process, reducing the nonlinear problem to a three-dimensional one. Moreover, the dissipation originates a Li\'enard system, for which an extended literature on limit cycles is present. Thanks to these features, we confirm that the dissipative mechanism is able to create self-sustained periodic behavior. In particular, we prove that coexistence of more than one stable limit cycles is possible in systems of this type, that is an interesting extension of existing models with dissipation in which {at most one attractive periodic solution is possible}. The model considered in this paper seems to be {more general and flexible}, since we give sufficient conditions on the interaction function in order to have as many periodic orbit as wanted.

The paper is organized as follows: in Section~\ref{the_model} we define the model with general single site distribution at a microscopic and at a macroscopic level and we state the main results on the dynamics with the Gaussian single site distribution. We analyze the situation in which more than one stable periodic orbit coexist in the limiting dynamics and we give sufficient conditions on the interaction function for this to happen. We illustrate the many different regimes that may be induced by the appropriate tuning of the interaction function with the qualitative analysis of an explicit example that shows coexistence of stable limit cycles. In Section~\ref{Sec:proof_main_theorem} we prove the main result on the existence of periodic solution to the macroscopic equation in certain regimes, through the study of a planar dynamical system. In Section~\ref{SEC:prop_chaos} we
prove well-posedness of the macroscopic nonlinear equation and the link between microscopic and macroscopic equation via propagation of chaos.
%its mean field limit, in Section~\ref{gaussian_distr} we focus on the Gaussian single site distribution and we study the dynamics and the equilibria with and without the dissipation term, in Section~\ref{limit_cycles} .

\section{The model and main results}\label{the_model}
Let us consider a generalized Curie-Weiss model, see \cite{EiEl88}, that is a sequence of probability measures on $\mathbb{R}^N$, for $N=1,2,\dots$, given by
\begin{equation}\label{gibbs_generalized}
\mathbf{P}_{N,\beta}(dx_1,\dots,dx_N)=\frac{1}{Z_N(\beta)}\exp\left(N\beta g\left(\sum_{i=1}^N\frac{x_i}{N}\right)\right)\prod_{i=1}^N\rho(x_i)dx_i,
\end{equation}
where $\rho$ is the density function of a symmetric probability measure on $\mathbb{R}$ {with full support} representing the single-site distribution of a spin in absence of interaction, $g$ is the interaction function, $\beta$ is the inverse  temperature of the model and $Z_N(\beta)$ is the normalizing constant. For each $N$ fixed, a Langevin dynamics associated to \eqref{gibbs_generalized} is a diffusion process $X^N$ with values in $\mathbb{R}^N$ such that $\mathbf{P}_{N,\beta}$ is its unique invariant measure, i.e. $X^N$ is solution to the following systems of \mbox{SDE}
\begin{equation}\label{X^N_no_dissip}
dX^N_i(t)=\frac{\beta}{2} g'\left(\frac{\sum_{j=1}^N X^N_j(t)}{N}\right)dt{+}\frac{\rho'(X^N_i(t))}{2\rho(X^N_i(t))}dt+dB^i_t,
\end{equation}
where $\{B^i\}_{i=1,\dots,N}$ is a family of independent $1$-dimensional Brownian motions. The dynamics in \eqref{X^N_no_dissip} represent an interacting particle system where each particle follows its own dynamics, given by the last two terms on the right-hand side, and it experiences an interaction which depends on the empirical mean of the system $m^N(t)\colon =\frac{\sum_{j=1}^N X^N_j(t)}{N}$.
 By following the approach in~\cite{DaFiRe13, CoDaFo15}, we aim to define a dynamical model of interacting diffusions where the interaction %that affects a single particle
 depends on a ``perceived magnetization'' instead of on $m^N(t)$. Indeed, we introduce the variables $\lambda^N_i$, for $i=1,\dots,N$, that evolve as the magnetization of the system but they undergo a dissipative and diffusive evolution
 and we substitute them in~\eqref{X^N_no_dissip}, obtaining
 \begin{equation}\label{lambda}
\displaystyle{ \left\{\begin{array}{l}
dX^N_i(t)=\frac{\beta}{2} g'\left(\lambda^N_i(t)\right)dt{+}\frac{\rho'(X^N_i(t))}{2\rho(X^N_i(t))}dt+dB^{i,1}_t,\\
\\
d\lambda^N_i(t)=-\alpha\lambda^N_i(t)dt+DdB^{i,2}_t+dm^N(t),
\end{array}\right.}
\end{equation}
 where $\{(B^{i,1},B^{i,2})\}_{i=1,\dots,N}$ are independent Brownian motions. When $\alpha=D=0$ the ``perceived magnetization'' is equivalent to the total magnetization of the system and the evolution~\eqref{lambda} is equivalent to~\eqref{X^N_no_dissip}. 
Clearly for the well-posedness of~\eqref{lambda}, the quantities involved must satisfy certain assumptions, that we enumerate here.
\begin{assumpt}\label{assumption_gen} Let $\alpha, D\geq0$ and $\beta>0$ and the functions $g$ and $\rho$ satisfy the following:
%The constants $\alpha$ and $D$ are nonnegative and the inverse absolute temperature $\beta$ is positive. Furthermore, the function $g$ and the probability measure $\rho$ satisfy the following conditions.
\begin{itemize}
\item[i)] $g\colon \mathbb{R}\rightarrow\mathbb{R}_{\geq0}$ is an even, $C^2(\mathbb{R})$ function, analytic outside a countable set of points, strictly increasing on $[0,\infty)$ with $g(0)=0$. 
%It is \emph{two-sided real analytic}, i.e. $\forall$ $x$~$\in$~$\mathbb{R}$ there exists $\delta>0$ and two real analytic functions $g_1$ and $g_2$ on $(x-\delta,x+\delta)$ such that $$g\colon=\left\{\begin{array}{ll}g_1&\text{ on }(x-\delta,x]\\g_2&\text{ on }[x,x+\delta).\end{array}\right.$$
\item[ii)] $g'$ is uniformly Lipschitz continuous, i.e. there exists a finite constant $L\geq0$ such that for all $x,y$ $\in$ $\mathbb{R}$
$$
|g'(x)-g'(y)|\leq L |x-y|.$$

\item[iii)] $\rho$ is the density function of a symmetric Borel probability measure on $\mathbb{R}$ {with full support}, absolutely continuous w.r.t. the Lebesgue measure, such that
%and, by abuse of notation, we denote its density function with $\rho(x)$. We require 
$\log(\rho(x))$ $\in$ $C^2$ and there exists $K>0$ s.t. for all $x,y$ $\in$ $\mathbb{R}$
$$
(x-y)\left(\frac{\rho'(x)}{\rho(x)}-\frac{\rho'(y)}{\rho(y)}\right)\leq K(1+(x-y)^2).$$
%We require that the function $\rho$ is $C^1(\mathbb{R})$ and that there exists a finite $K>0$ such that the function $\log (\rho(\cdot))$ is concave for all $|x|>K$.
\item[iv)] There exists a symmetric, nonconstant, convex function $h$ on $\mathbb{R}$ such that
\begin{align}
g(x)\leq h(x)\, \text{ for all }x\in\mathbb{R},\nonumber\\
\int_{\mathbb{R}}e^{a h(x)}\rho(x)dx<\infty\, \text{ for all } a>0.\label{integrabilita_g}
\end{align}
\end{itemize}

\end{assumpt}
These are technical assumptions that mainly come from the definition of the generalized Curie-Weiss model in \cite{EiEl88} and from the classical theory of mean field models. Under Assumption~\ref{assumption_gen} the generalized Curie-Weiss model is well-defined and, since the interactions %in \eqref{inf_gen_N} 
are of mean field type, we expect a propagation of chaos result to hold.
%if $\alpha=D=0$, the Markov process with generator \eqref{inf_gen_N} describes a Langevin dynamic with stationary measure $\mathbf{P}_{N,\beta}\times \delta_{0}\times\dots\times \delta_{0}$ where $\mathbf{P}_{N,\beta}$ is the Gibbs measure \eqref{gibbs_generalized} on $\mathbb{R}^N$ and $\delta_{0}$ is the Dirac delta centered in the stationary magnetization $m^N=0$. 
Therefore, we define the nonlinear Markov process $(X,\lambda)$ on $\mathbb{R}^2$, that is solution of the following nonlinear \mbox{SDE}:
\begin{equation}\label{nonlinear_limit}
\left\{\begin{array}{l}
dX_t=\frac{\beta}{2} g'(\lambda_t)dt{+}\frac{\rho'(X_t)}{2\rho(X_t)}dt+dB^1_t\\
d\lambda_t=-\alpha \lambda_t dt+\langle\mu_t(x,l),\frac{\beta}{2} g'(l){+}\frac{\rho'(x)}{2\rho(x)} \rangle dt+DdB^2_t\\
\mu_t=Law(X_t,\lambda_t),
\end{array}
\right.
\end{equation}
where $B=(B^1,B^2)$ is a two dimensional Brownian motion. Well-posedness of \eqref{nonlinear_limit} and the property of propagation of chaos are proved, with a rather standard approach, in Section~\ref{SEC:prop_chaos}. Collective periodic behavior is a phenomenon which appears only in this mean field limit. Indeed, the finite dimensional dynamics of the $N$ particles system forbid the existence of more than one stationary solution, while the nonlinear process may admit invariant sets of measures, which identify as stable periodic solution. For this reason, in the following we study the long-time behavior of solutions of~\eqref{nonlinear_limit} under certain additional assumptions.
% Their proofs follow a rather standard approach in the theory of McKean-Vlasov equations and they are reported in the Appendix.

\subsection{Existence of limit cycles in the Gaussian dynamics}\label{gaussian_distr}
We choose to focus on the {\em Gaussian dynamics}, i.e. we choose as single site distribution of spins the Normal distribution with mean zero and variance $\sigma^2$. This means that we add the following set of assumptions.
\begin{assumpt}\label{assumption_gauss} Let $D=0$, $\sigma>0$ and 
$$\rho(x)\colon=\frac{1}{\sqrt{2\pi \sigma^2}}e^{-\frac{x^2}{2\sigma^2}}.$$
\begin{itemize}
\item[i)] There exists a symmetric, nonconstant, convex function $h$ on $\mathbb{R}$ such that
\begin{align}
g(x)\leq h(x)\, \text{ for all }x\in\mathbb{R},\nonumber\\
\int_{\mathbb{R}}e^{a h(x)}e^{-x^2}dx<\infty\, \text{ for all } a>0.\label{integrabilita_g}
\end{align}
\item[ii)] Let the initial condition measure be of the form 
$$
\mu_0(dx,d\lambda)=\nu_0(dx)\times \delta_{\lambda_0}(d\lambda),
$$
where $\nu_0$ is a square-integrable measure  on $\mathbb{R}$ and $\delta_{\lambda_0}$ is a Dirac delta centered in  $\lambda_0$~$\in$~$\mathbb{R}$.
\end{itemize}
\end{assumpt}
%This drastically simplifies the treatment, indeed, 
Under Assumptions~\ref{assumption_gen} and~\ref{assumption_gauss}, the nonlinear process $(X(t),\lambda(t))_{t\geq0}$,  is solution of the following nonlinear \mbox{SDE}:
%the solution of \eqref{nonlinear_limit}, is a Gaussian process. 
\begin{equation}\label{nonlinear_limit_gaussian}
\left\{\begin{array}{l}
dX_t=\frac{\beta}{2} g'(\lambda_t)dt-\frac{X_t}{2\sigma^2}dt+dB_t,\\
\frac{d\lambda_t}{dt}=-\alpha \lambda_t +\frac{\beta}{2} g'(\lambda_t)-\frac{m_t}{2\sigma^2}, \\
\mu_t=Law(X_t,\lambda_t)\,\, \text{ and }\,m_t=\langle \mu_t(dx,dl),x\rangle,
\end{array}
\right.
\end{equation}
for $\{B_t\}$ Brownian motion. Clearly $\mu_t$ preserves the product form $\nu_t(dx)\times \delta_{\lambda_t}(d\lambda)$ and the resulting process is a Gaussian process, specifically it is completely described by the initial condition $\mu_0$ and the quantities $\{(m_t,v_t,\lambda_t)\}_{t\geq0}$, where  $v_t=\mathbf{Var}[X_t]$.
In the following we study the stability properties and the long-time behavior of~\eqref{nonlinear_limit_gaussian} in this regime, where the $\lambda$ component is completely deterministic.

\begin{theorem} \label{THM:convergencegaussianmeasure2} Fix $\sigma^2>0$ and $\alpha>0$, for every $\beta>0$
%and let $\Lambda(\beta)$ as in \eqref{zero_points}, 
the process $(X_t,\lambda_t)$ described by \eqref{nonlinear_limit_gaussian} has exactly one stationary solution of the form $\nu^*(dx)\times \delta_{\lambda^*}(d\lambda)$ and it is given by the measure
$$\mu^*_{(0,0)}(dx,dl)\colon=\frac{1}{\sqrt{2\pi \sigma^2}}e^{-\frac{x^2}{2\sigma^2}}dx\times\delta_0(dl).$$
\begin{itemize}
\item[i)] There exists $\beta^*>0$ such that $\forall$~$\beta$~$\in$~$(0,\beta^*)$ {$\mu^*_{(0,0)}$ is the unique stationary solution for~\eqref{nonlinear_limit_gaussian}}. Moreover, for all $\mu_0(dx,d\lambda)=\nu_0(dx)\times \delta_{\lambda_0}(d\lambda),
$ square-integrable initial conditions with $\lambda_0 \in\mathbb{R}$,
\begin{equation}\label{long-time_origin_stable}
\lim_{t\rightarrow \infty}\|\mu_t(\cdot)-\mu^*_{(0,0)}(\cdot)\|_{TV}=0.
\end{equation}

\item[ii)] If $g''(0)>0$, let $\beta_H\colon =\frac{2\alpha+\frac{1}{\sigma^2}}{g''(0)}$, then for $\beta>\beta_H$ there exists at least one closed curve $\gamma$~$\in$~$\mathbb{R}^2$ such that the set
$$
\Gamma(\gamma)=\left\{\mu^*_{(m,\lambda)}(dx,dl)\colon=\frac{1}{\sqrt{2\pi \sigma^2}}e^{-\frac{(x-m)^2}{2\sigma^2}}dx\times\delta_{\lambda}(dl),\,\, \text{ for all }\, (m,\lambda)\in\gamma\right\}
$$
is an invariant set for the dynamics \eqref{nonlinear_limit_gaussian}. Moreover, it exists an open set $U(\gamma)\in\mathbb{R}^2$ such that for all $\mu_0(dx,d\lambda)=\nu_0(dx)\times \delta_{\lambda_0}(d\lambda),
$ square-integrable initial conditions with $(\langle \mu_0,x\rangle, \lambda_0)$~$\in$~$U(\gamma)$, it holds
\begin{equation}\label{long-time2}
\lim_{t\rightarrow \infty}\inf_{(m,\lambda)\in\gamma}\|\mu_t(\cdot)-\mu^*_{(m,\lambda)}(\cdot)\|_{TV}=0.
\end{equation}
\item[iii)] If $g''(0)>0$ and there exists $C>0$ such that for all $x\in(C,\infty)$ the function $g'(x)$ is concave, then there exists a $\beta_{UC}$ such that for all $\beta>\beta_{UC}$ there exists a unique closed curve $\gamma$~$\in$~$\mathbb{R}^2$ such that $\Gamma(\gamma)$ is invariant for the dynamics~\eqref{nonlinear_limit_gaussian} and \eqref{long-time2} holds with $U(\gamma)=\mathbb{R}^2\setminus \{(0,0)\}$.
\end{itemize}
 
%Moreover, for all $\mu_0(dx,d\lambda)=\nu_0(dx)\times \delta_{\lambda_0}(d\lambda),$ square-integrable initial conditions with $\lambda_0 \in\mathbb{R}$, \begin{equation}\label{long-time2}\lim_{t\rightarrow \infty}\inf_{(m,\lambda)\in\gamma}\|\mu_t(\cdot)-\mu^*_{(m,\lambda)}(\cdot)\|_{TV}=0,\end{equation} where $\gamma$ is the attractor of the trajectory starting from $(\langle \mu_0,x\rangle, \lambda_0)$ in the dynamical system \eqref{ODE_diss}; here with $\gamma$ we mean either a limit cycle or simply the origin.
\end{theorem}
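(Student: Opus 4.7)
The plan is to use the Gaussianity preserved by \eqref{nonlinear_limit_gaussian} under Assumption~\ref{assumption_gauss} to reduce the nonlinear problem to a finite-dimensional ODE. Writing $m_t$, $v_t$, $\lambda_t$ for the mean, the variance, and the deterministic component, $v_t$ decouples and satisfies $\dot v_t=1-v_t/\sigma^2$, so $v_t\to\sigma^2$ exponentially, while the pair $(m_t,\lambda_t)$ evolves according to the planar system
\begin{equation*}
\dot m=\tfrac{\beta}{2}g'(\lambda)-\tfrac{m}{2\sigma^2},\qquad
\dot\lambda=-\alpha\lambda+\tfrac{\beta}{2}g'(\lambda)-\tfrac{m}{2\sigma^2}.
\end{equation*}
Since $g$ is even one has $g'(0)=0$, and the identity $\dot\lambda=\dot m-\alpha\lambda$ forces $\lambda=0$ at any equilibrium, so $(0,0)$ is the only fixed point; this already proves the uniqueness claim of the opening statement. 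Convergence of $\mu_t$ in total variation will then follow from convergence of $(m_t,v_t,\lambda_t)$ via elementary Gaussian estimates. Eliminating $m$ by differentiation recasts the planar system as the Li\'enard equation
\begin{equation*}
\ddot\lambda+f(\lambda)\dot\lambda+g_0(\lambda)=0,\quad f(u)=\alpha+\tfrac{1}{2\sigma^2}-\tfrac{\beta}{2}g''(u),\quad g_0(u)=\tfrac{\alpha u}{2\sigma^2},
\end{equation*}
which governs the rest of the analysis.

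For part (i), the linearization at the origin has determinant $\alpha/(2\sigma^2)>0$ and trace $\tfrac{\beta}{2}g''(0)-\alpha-\tfrac{1}{2\sigma^2}$. The uniform Lipschitz bound on $g'$ from Assumption~\ref{assumption_gen} gives $\sup_u|g''(u)|\le L$, so choosing $\beta^{*}=(2\alpha+1/\sigma^2)/L$ (with $\beta^{*}=+\infty$ when $L=0$) makes $f(u)>0$ for every $u$ whenever $\beta<\beta^{*}$. The Li\'enard energy $E(\lambda,\dot\lambda)=\tfrac12\dot\lambda^2+\int_0^{\lambda}g_0(s)\,ds$ then satisfies $\dot E=-f(\lambda)\dot\lambda^2\le 0$, and LaSalle's invariance principle collapses the $\omega$-limit of every trajectory to the unique equilibrium.

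For part (ii), the hypothesis $\beta>\beta_H$ is exactly that the trace above is positive, so the origin becomes an unstable focus while remaining the only fixed point. The key step is to produce a compact, forward-invariant set $K$ containing the origin in its interior; Poincar\'e--Bendixson then yields a periodic orbit in $K\setminus\{(0,0)\}$, and taking the innermost such orbit selects an orbitally stable cycle $\gamma$ whose basin of attraction provides the open set $U(\gamma)$. To build $K$ I would exploit the integrability condition \eqref{integrabilita_g}, which forces the convex majorant $h$, and hence $g$, to be subquadratic; combined with $g''$ bounded and $g$ monotone, a mean-value argument then gives $g'(u)/|u|\to 0$ as $|u|\to\infty$. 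Consequently $F(u)=(\alpha+\tfrac{1}{2\sigma^2})u-\tfrac{\beta}{2}g'(u)$ is coercive, and the required trapping region can be constructed by standard techniques in the Li\'enard plane $(u,w)=(\lambda,\dot\lambda+F(\lambda))$, where the flow reads $\dot u=w-F(u)$, $\dot w=-g_0(u)$. This coercivity step is where the main technical work lies, since $f$ itself need not have a definite sign at infinity.

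For part (iii), the additional concavity of $g'$ on $(C,\infty)$ implies that $f$ is non-decreasing beyond some level (with a symmetric statement on the negative axis by evenness of $g$), so $F$ is odd, has exactly one positive zero and is strictly increasing past it. This is precisely the geometric setting in which classical uniqueness results for Li\'enard limit cycles (Massera, Sansone, Lins--de~Melo--Pugh and their refinements) apply, yielding a unique closed orbit $\gamma$. Because the origin is the only equilibrium and is unstable while the flow is dissipative at infinity, every nontrivial trajectory is then attracted to $\gamma$, which gives $U(\gamma)=\mathbb{R}^2\setminus\{(0,0)\}$. The main obstacle here is to fit $f$ and $g_0$ into the precise hypotheses of one of these classical templates, since uniqueness of limit cycles for general Li\'enard equations is notoriously delicate; the symmetry of $g$ and the monotonicity of $F$ beyond its zero are what make this matching possible.
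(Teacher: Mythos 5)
Your reduction to the planar $(m,\lambda)$ system and its Li\'enard form is exactly the paper's route: the paper passes to coordinates $y=\frac{1}{2\sigma^2}(\lambda-m)$, which yields the same second-order equation $\ddot\lambda+f_{\alpha,\beta}'(\lambda)\dot\lambda+\frac{\alpha}{2\sigma^2}\lambda=0$ that you write (your $f$ is the derivative of the paper's $f_{\alpha,\beta}$ in \eqref{funct_f}). Parts (i) and (iii) match the paper's proof of Theorem~\ref{thm:dyn_system_with_diss}: a Lyapunov/LaSalle argument with the threshold coming from the Lipschitz constant of $g'$, and, for (iii), verification that $f_{\alpha,\beta}$ has a single positive zero past which it is positive and increasing (using the sub-linearity of $g'$ forced by \eqref{integrabilita_g}, the same observation you make), followed by a classical uniqueness theorem for Li\'enard cycles --- the paper cites~\cite{CaVi05} and supplements it with an explicit monotonicity argument for the energy difference $\Delta W$ along the return map. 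In part (ii) you genuinely diverge: the paper checks the transversal eigenvalue crossing at $\beta_H$ and invokes the Hopf bifurcation, whereas you propose unstable origin plus trapping region plus Poincar\'e--Bendixson, selecting the innermost cycle. Your route is arguably better adapted to the claim ``for all $\beta>\beta_H$'' (Hopf only produces cycles in a one-sided neighbourhood of $\beta_H$), but two points need care: the innermost cycle is only guaranteed to attract from the inside (it may be semi-stable), so $U(\gamma)$ should be taken as its interior minus the origin rather than a full annular neighbourhood; and the existence of an innermost cycle should be justified by the hyperbolic repulsion at the origin for $\beta>\beta_H$, which prevents periodic orbits from accumulating there.

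The genuine gap is the lift from the ODE to the total-variation statements \eqref{long-time_origin_stable} and \eqref{long-time2}. Gaussianity is \emph{not} preserved for a general square-integrable $\nu_0$: Assumption~\ref{assumption_gauss} only makes $\lambda_0$ deterministic, so $\nu_t$ is a mixture of Gaussians, and convergence of $(m_t,v_t,\lambda_t)$ does not by itself imply TV convergence of $\nu_t$ to $\mathcal{N}(m^*,\sigma^2)$. The paper closes this with Lemma~\ref{lem:linearSDE}: it freezes the deterministic signal $\lambda_t$, writes $X_t$ as a time-inhomogeneous OU process with explicit Gaussian kernel $P_t(y,\cdot)$, shows that $\int_0^t e^{(s-t)/2\sigma^2}\tfrac{\beta}{2}g'(\lambda_s)\,ds-a=m_t-a-e^{-t/2\sigma^2}m_0\to0$ along the attractor, and then integrates the kernel-level TV distance against $\nu_0(dy)$. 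Your ``elementary Gaussian estimates'' must be an argument of this type on the transition kernel; as written, the premise ``Gaussianity preserved'' is false and the step is missing. Relatedly, uniqueness of the product-form stationary measure needs slightly more than uniqueness of the fixed point of the planar ODE: you must first argue that a stationary $\nu^*$ solves the stationary Fokker--Planck equation with frozen drift $\tfrac{\beta}{2}g'(\lambda^*)$ and is therefore the Gaussian with mean $\beta\sigma^2 g'(\lambda^*)$, before the equation for $\lambda$ forces $(m^*,\lambda^*)=(0,0)$; the paper carries this out explicitly.
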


{In Theorem~\ref{THM:convergencegaussianmeasure2} we describe different parameter regimes of the dynamics~\eqref{nonlinear_limit_gaussian}. It is clearly not an exhaustive picture, but it already shows some interesting features of the process. Point \emph{i)} says that, at sufficiently high temperatures, there is a unique equilibrium for the process, which is the disordered situation. In Point \emph{ii)} we highlight the existence, for certain types of interaction function, of what can be identified as a \emph{Hopf bifurcation}. Then, for sufficiently low temperatures we see the existence of a set of measures which is invariant under the dynamics. This corresponds to a periodic solution of the system~\eqref{nonlinear_limit_gaussian} and it is stable in the sense that it has a certain domain of attraction. Point \emph{iii)} proves that, for a certain class of interaction function, there exists a regime in which we have uniqueness of the periodic solution and this solution attracts all the trajectories starting with a deterministic $\lambda$, except for the stationary one $\mu^*_{(0,0)}$. In the following subsection we underline the existence of many other regimes and in particular we give sufficient conditions for the emergence of regimes with an arbitrary number of periodic solution. }

\subsection{Coexistence of stable limit cycles}\label{limit_cycles}
As pointed out in \cite{DaFiRe13}, collective periodic behavior can be observed in certain regimes when dissipation is introduced in a classical Curie-Weiss model. In that particular model, a Hopf bifurcation in a dynamical system identifies the transition from disorder to a phase in which a unique globally stable limit cycle is present. The model introduced here generalizes this scheme: the proof of Theorem~\ref{THM:convergencegaussianmeasure2} will show that the study of limit cycles depends (as in~\cite{DaFiRe13}) on the study of a planar dynamical system of Li\'{e}nard type, see~\cite{Od96, ChLlZh07}. In point \emph{ii)} of Theorem~\ref{THM:convergencegaussianmeasure2} we describe what happens when a Hopf bifurcation occurs in this system but, according to the form of the interaction function $g(x)$, the limiting dynamics may display a richer behavior. The most interesting phenomena %which can be observed 
are the appearance of limit cycles when the origin is still locally stable and the coexistence of more than one limit cycles. These orbits, in general, are created and destroyed through global bifurcations. Let us give a general idea of what may happen, depending on the interaction function $g$. As we prove in Section~\ref{Sec:proof_main_theorem}, the limit cycles of~\eqref{nonlinear_limit_gaussian} depend on the following dynamical system of Li\'enard type:
\begin{equation}\label{ODE_lienard_change_variable}
\left\{
\begin{array}{l}
\dot{y_t}=-\frac{\alpha}{2\sigma^2}\lambda_t,\\
\dot{\lambda_t}=y_t-f_{\alpha,\beta}(\lambda_t), \\
\end{array}
\right.
\end{equation}
with 
\begin{equation}\label{funct_f}
f_{\alpha,\beta}(x)\colon=\left(\alpha+\frac{1}{2\sigma^2}\right) x -\frac{\beta}{2} g'(x).
\end{equation}
 The number of limit cycles in a Li\'{e}nard system such as \eqref{ODE_lienard_change_variable} mainly depends on the function $f_{\alpha,\beta}(x)$: some tools to determine the exact number of limit cycles are available in literature (see \cite{Od96}, \cite{ChLlZh07} and references therein) but their application may be cumbersome in a general setting, since several features of the function $f_{\alpha,\beta}(x)$ should be studied, such as the position of its zeroes, its local minima and maxima, their height and so on. However, playing with the form of the interaction function $g$, we can always create a Gaussian Curie-Weiss model with dissipation with a  customized number of phase transitions and of coexisting limit cycles. 
%\subsection{Number of limit cycles}
%Let us briefly underline the role of the function $g$ in the occurrence of limit cycles in the dynamics of \eqref{ODE_diss}. To this aim, we rewrite the Li\'enard system \eqref{ODE_lienard}:In the rich literature on Li\'enard system, we see that the form of the function $f_{\alpha,\beta}$ plays a fundamental role in the number of limit cycles of the system. 
In particular, from the results in~\cite{Od96}, we can state the following.
\begin{proposition}\label{prop:N_cicli}
Fix $\sigma^2,\alpha>0$ and suppose that there exists a $\beta^*$ such that the following conditions are satisfied:
\begin{itemize}
\item[i)] the function $f_{\alpha,\beta^*}$ has $K$ positive zeros $x_0\colon =0<x_1<\dots<x_K(<x_{K+1} \text{ a bound })$ at which it changes sign;
\item[ii)] for every $k=1,\dots,N$ there is a $C^1$ mapping $\phi_k\colon[x_{k-1},x_k]\rightarrow[x_{k},x_{k+1}]$ such that
$$
\phi_k(x)\phi_k'(x)\geq x\,\text{ and }\, |f_{\alpha,\beta^*}(\phi_k(x))|\geq |f_{\alpha,\beta^*}(x)|;$$
\item[iii)] the function $f_{\alpha,\beta^*}$ on each interval $[x_{k-1},\phi_{k-1}(x_{k-1})]$ for $2\leq k\leq K+1$ has an extremum at a unique point $y_k$ and its derivative is weakly monotone.
\end{itemize} 
Then the generalized Curie-Weiss model with dissipation has at least one regime in which it has exactly $N$ invariant sets of the form $\Gamma(\gamma_i)$ for some closed curves $\{\gamma_i\}_{i=1,\dots,K}$. These are concentric cycles such that the outer is attractive, then the others alternate as repulsive and  attractive, respectively. 
\end{proposition}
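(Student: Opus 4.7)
\medskip

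\textbf{Proof plan for Proposition~\ref{prop:N_cicli}.} The statement is essentially a translation, into the language of our measure-valued dynamics, of the exact-counting result for limit cycles of Liénard systems due to Odani~\cite{Od96}. The plan is therefore to first reduce the problem to the planar system \eqref{ODE_lienard_change_variable}, then to check that hypotheses (i)--(iii) of the proposition are exactly the ones required by Odani's theorem, and finally to translate back to invariant sets of the form $\Gamma(\gamma)$.

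First I would invoke the reduction carried out (in the proof of Theorem~\ref{THM:convergencegaussianmeasure2}) in Section~\ref{Sec:proof_main_theorem}: since under Assumption~\ref{assumption_gauss} the nonlinear process \eqref{nonlinear_limit_gaussian} is Gaussian and its law at time $t$ is determined by $(m_t,v_t,\lambda_t)$, with $v_t$ satisfying a decoupled linear ODE that converges to $\sigma^2$, the stability analysis of invariant measures of the product form $\nu^\ast(dx)\times\delta_{\lambda^\ast}(d\lambda)$ reduces to the study of periodic orbits of the planar system \eqref{ODE_lienard_change_variable} for $(y_t,\lambda_t)$, where $y_t$ is an affine function of $m_t$. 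In particular, every closed orbit $\gamma$ of \eqref{ODE_lienard_change_variable} lifts to an invariant set $\Gamma(\gamma)$ in the space of measures, and stability in the Liénard system translates, via total-variation contraction of the Gaussian profile to its instantaneous mean, into stability of $\Gamma(\gamma)$.

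Next I would apply Odani's theorem to the Liénard system $\dot y = -\tfrac{\alpha}{2\sigma^2}\lambda,\ \dot\lambda = y-f_{\alpha,\beta^\ast}(\lambda)$. Condition (i) of the proposition provides the $K$ positive zeros of $f_{\alpha,\beta^\ast}$ at which the function changes sign; by the symmetry $g'(-x)=-g'(x)$ (inherited from $g$ being even), the function $f_{\alpha,\beta^\ast}$ is odd, so the zeros are distributed symmetrically around the origin and each pair $(-x_k,x_k)$ bounds a potential annular region where a limit cycle may live. Condition (ii) is precisely Odani's comparison hypothesis: the $C^1$ map $\phi_k$ plays the role of a Poincaré-like section-comparison ensuring that the Liénard energy $H(y,\lambda)=\tfrac{\alpha}{4\sigma^2}\lambda^2 + \tfrac{1}{2}y^2$ is strictly monotone along half-orbits between consecutive nullcline crossings, forcing at most one closed orbit in each annulus $\{x_{k-1}\le|\lambda|\le x_{k+1}\}$. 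Condition (iii), the weak monotonicity of $f'_{\alpha,\beta^\ast}$ on the intervals $[x_{k-1},\phi_{k-1}(x_{k-1})]$, is the extra technical input of Odani that upgrades existence to exactness of the count by ruling out multiplicities. Combining these three points with the Poincaré--Bendixson theorem (applied separately in each invariant annulus, after verifying that trajectories cannot cross the nullclines inappropriately) yields exactly $K$ distinct concentric limit cycles $\gamma_1,\dots,\gamma_K$.

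For the alternation of stability I would argue as follows: the outermost cycle $\gamma_K$ must be attracting because $f_{\alpha,\beta^\ast}(\lambda)$ grows linearly at $\pm\infty$ (with positive coefficient $\alpha+\tfrac{1}{2\sigma^2}>0$ from the definition of $f_{\alpha,\beta}$), hence the vector field points inward on a sufficiently large circle and standard Liénard-type estimates provide a trapping annulus whose $\omega$-limit must coincide with $\gamma_K$. Since two concentric periodic orbits in a planar flow cannot both be attracting or both repelling (otherwise by Poincaré--Bendixson a third orbit would be squeezed between them, contradicting the count $K$), the cycles alternate inward between attracting and repelling. Finally, lifting this picture back via the reduction of step~1 gives the claimed $K$ invariant sets $\Gamma(\gamma_i)$ with the stated stability alternation.

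The main obstacle I expect is not the application of Odani's theorem itself, which is essentially a citation, but rather the bookkeeping needed to verify that our hypotheses (i)--(iii) are literally what Odani requires (he formulates them for systems of the form $\dot x = y - F(x),\ \dot y = -g(x)$, so one has to match signs and perform the harmless renaming of variables and scaling of $y$), and the passage from ``limit cycle of the planar ODE'' to ``invariant set of measures $\Gamma(\gamma)$ with domain of attraction in the weak/TV topology''; the latter is handled exactly as in the proof of Theorem~\ref{THM:convergencegaussianmeasure2}(ii)--(iii), whose argument applies verbatim to each $\gamma_i$.
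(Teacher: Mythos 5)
Your proposal is correct and follows essentially the same route as the paper: the paper's proof is precisely the observation that the hypotheses are those of Odani's Theorems A and B applied to the Li\'enard system \eqref{ODE_lienard_change_variable}, with the lift to invariant sets $\Gamma(\gamma_i)$ handled exactly as in the proof of Theorem~\ref{THM:convergencegaussianmeasure2} via Lemma~\ref{lem:linearSDE}. Your additional detail on the stability alternation and the trapping region at infinity is consistent with what Odani's result already delivers.
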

The proof of this result is a simple application of the results in~\cite{Od96}. It is easy to see that the function $f_{\alpha,\beta}$
depends on the choice of the interaction function $g$. Since Assumptions~\ref{assumption_gen} and \ref{assumption_gauss} are not very restrictive, $g$  can be manipulated to obtain a system that admits a regime with the desired number of limit cycles.

\subsection{An explicit example }
With an explicit example, we show how we can manipulate the interaction function $g$ in order to observe the coexistence of two stable limit cycles. Let us define the function 
\begin{equation}\label{g_example}
g(x)=\tanh\left(ax^2+bx^4+cx^6\right),
\end{equation}
with $a,b,c$ suitable constants such that $g$ stays strictly increasing on $[0,\infty)$. Fix $\sigma^2>0$, then the pair $(g,\rho)$, with $\rho\sim\mathcal{N}(0,\sigma^2)$ clearly satisfies Assumptions~\ref{assumption_gen} and~\ref{assumption_gauss} and it defines a generalized Curie-Weiss model.  We consider two triplets of constants $(1/2,-1,2)$ and $(1,1,0)$  in order to observe some particular regimes that do not exist for the classical Curie-Weiss model with dissipation. {Let us underline that, in the following, the qualitative study of the behavior of the process is made by means of the study of a system of ODE. We postpone to Section~\ref{Sec:proof_main_theorem} the proof that this is indeed sufficient.}
\subsubsection{Case A: triplet $\left(\frac{1}{2},-1,2\right)$} We see from Figure~\ref{fig:g} the changes in the concavity of $g'(x)$. {This causes an interesting behavior also in the dynamics without dissipation. Indeed, if we consider $\alpha=0$ and $\lambda_0=\langle \nu_0,x\rangle$, the dynamics in~\eqref{nonlinear_limit_gaussian} is simply the mean field limit of the Langevin dynamics described in~\eqref{X^N_no_dissip}. This is a Gaussian nonlinear process and it is clear that it suffices to study the following system
\begin{equation}\label{ODE_NOdiss_with_v}
\left\{
\begin{array}{l}
\dot{m_t}=\frac{\beta}{2}g'(m_t)-\frac{m_t}{2\sigma_2},\\
\dot{v_t}=1-\frac{v_t}{\sigma^2},\\
\end{array}
\right.
\end{equation}
to analyze its stationary measures and its long-time behavior. We analyze in details only the equilibria of the first row in~\eqref{ODE_NOdiss_with_v}, since the two variables are independent. The ODE is characterized by three critical values of $\beta$ and the four following regimes:}
\begin{itemize}
\item[-] for $\beta<\beta_1$ the origin is a global attractor;
\item[-] for $\beta$~$\in$~$(\beta_1,\beta_2)$ the origin is locally stable, but there are four other equilibrium points $-x_2<-x_1<0<x_1<x_2$, such that $\pm x_2$ are stable and $\pm x_1$ are unstable;
\item[-] for $\beta$~$\in$~$(\beta_2,\beta_3)$ the origin becomes unstable and two additional  stable equilibrium points appear, $\dots -x_1 <-x_3<0<x_3<x_1 \dots$;
\item[-] for $\beta=\beta_3$ the pairs of equilibrium points $\{x_3,x_1\}$ and $\{-x_3,-x_1\}$ collapse and disappear, such that for $\beta>\beta_3$ there are three equilibrium points $-x_2<0<x_2$, the outer two are stable and the origin is unstable.
\end{itemize}
\begin{figure}[h!] 
\centering
   \includegraphics[width=0.49\linewidth]{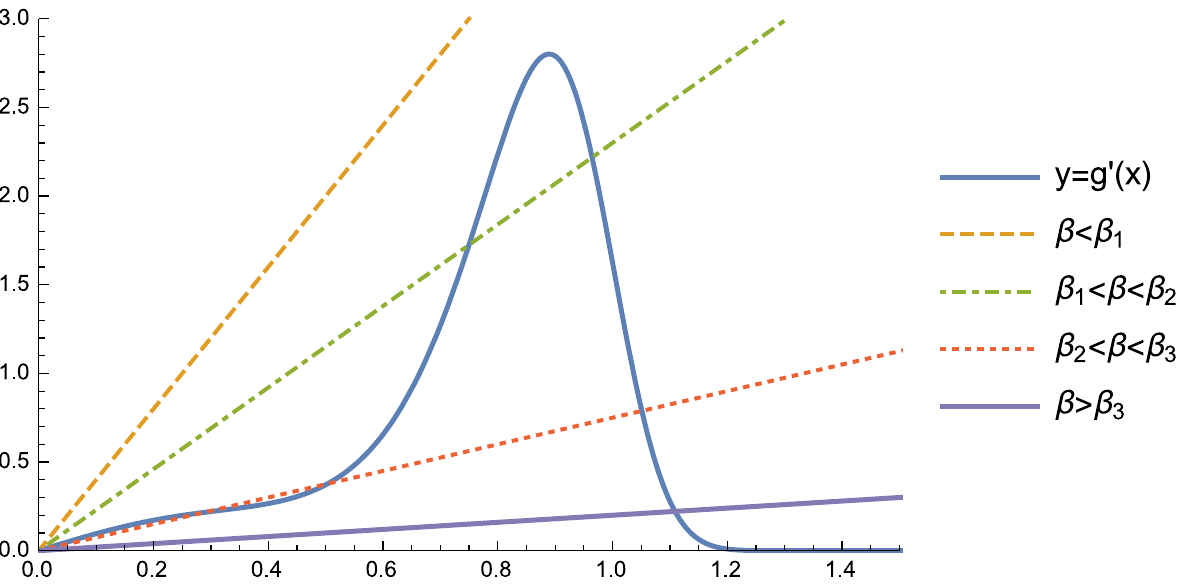}
      % \caption{(a)}
   \includegraphics[width=0.49\linewidth]{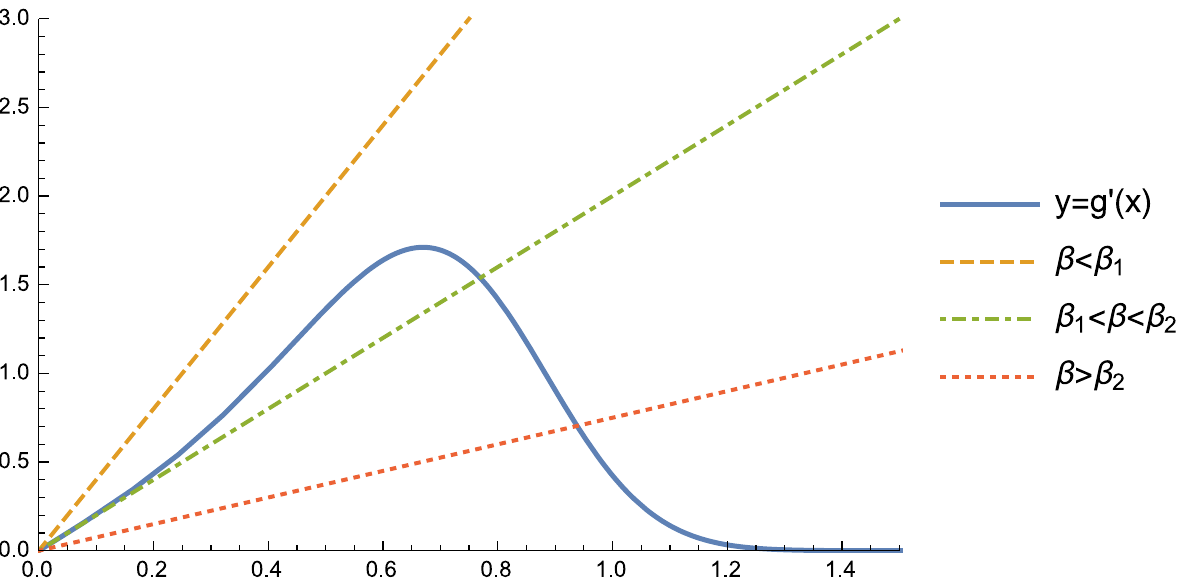}
   \caption{The plot of the function $g'$ and of  lines $y=\frac{1}{\beta\sigma^2}x$ for different values of $\beta$. The number of intersections gives the number of equilibrium points in the positive axes. Left: the case~A. Right: the case~B.  }\label{fig:g}
\end{figure}

The exact critical values may be obtained numerically, and the behavior of the dynamical system is clear from~\eqref{ODE_NOdiss_with_v}.\\ In this case, the dissipated dynamics {(identified from the study of system~\eqref{ODE_diss})} actually shows four different regimes as well, but the critical values $\hat\beta_1(\alpha)$, $\hat\beta_2(\alpha)$, $\hat\beta_3(\alpha)$ are not straightforwardly obtained with the same procedure of the elements of $CV$. To be precise, if $\beta_1$ corresponds to the smallest value of $\beta$ in which the line $y=\frac{x}{\sigma^2\beta}$ is tangent to the graph of $y=g'(x)$, the value $\hat\beta_1(\alpha)$ is strictly greater than the smallest value of $\beta$ such that the line  $y=\frac{2\alpha+\frac{1}{\sigma^2}}{\beta}x$ is tangent to the graph of $y=g'(x)$. This means that there exists a $\beta^*$ such that the line $y=\frac{2\alpha+\frac{1}{\sigma^2}}{\beta^*}x$ intersects the graph of $y=g'(x)$ but any limit cycle occurs. Nevertheless, the system displays a regime of \emph{coexistence of stable limit cycles}. Let us better explain the four regimes that we observe in system \eqref{ODE_diss} (actually the computations and the plots refer to system \eqref{ODE_lienard_change_variable}, since the link with the function $f_{\alpha,\beta}$ is more clear in this case).

\begin{figure}[h!] 
\centering
   \includegraphics[width=0.45\linewidth]{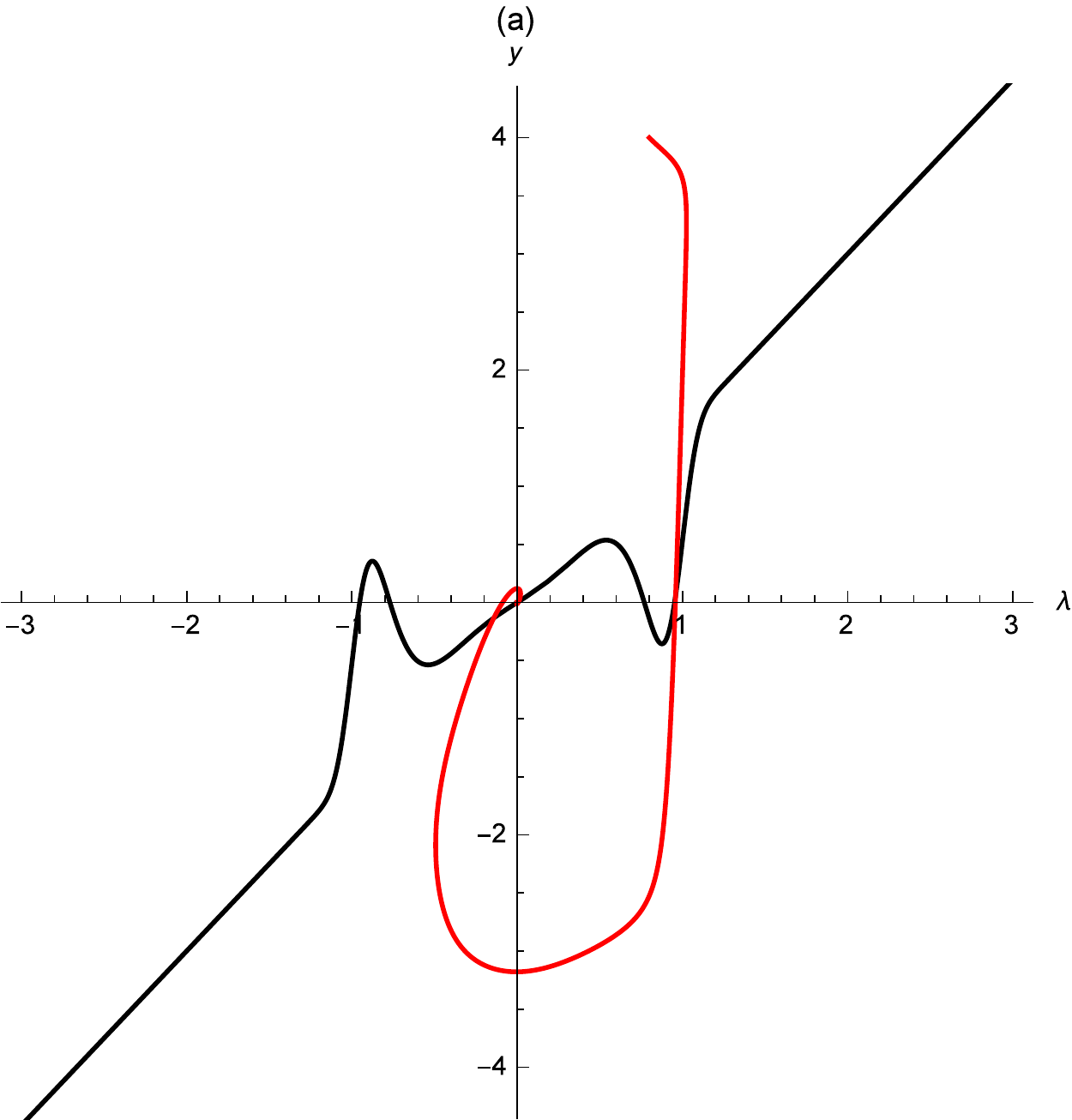}
   \includegraphics[width=0.45\linewidth]{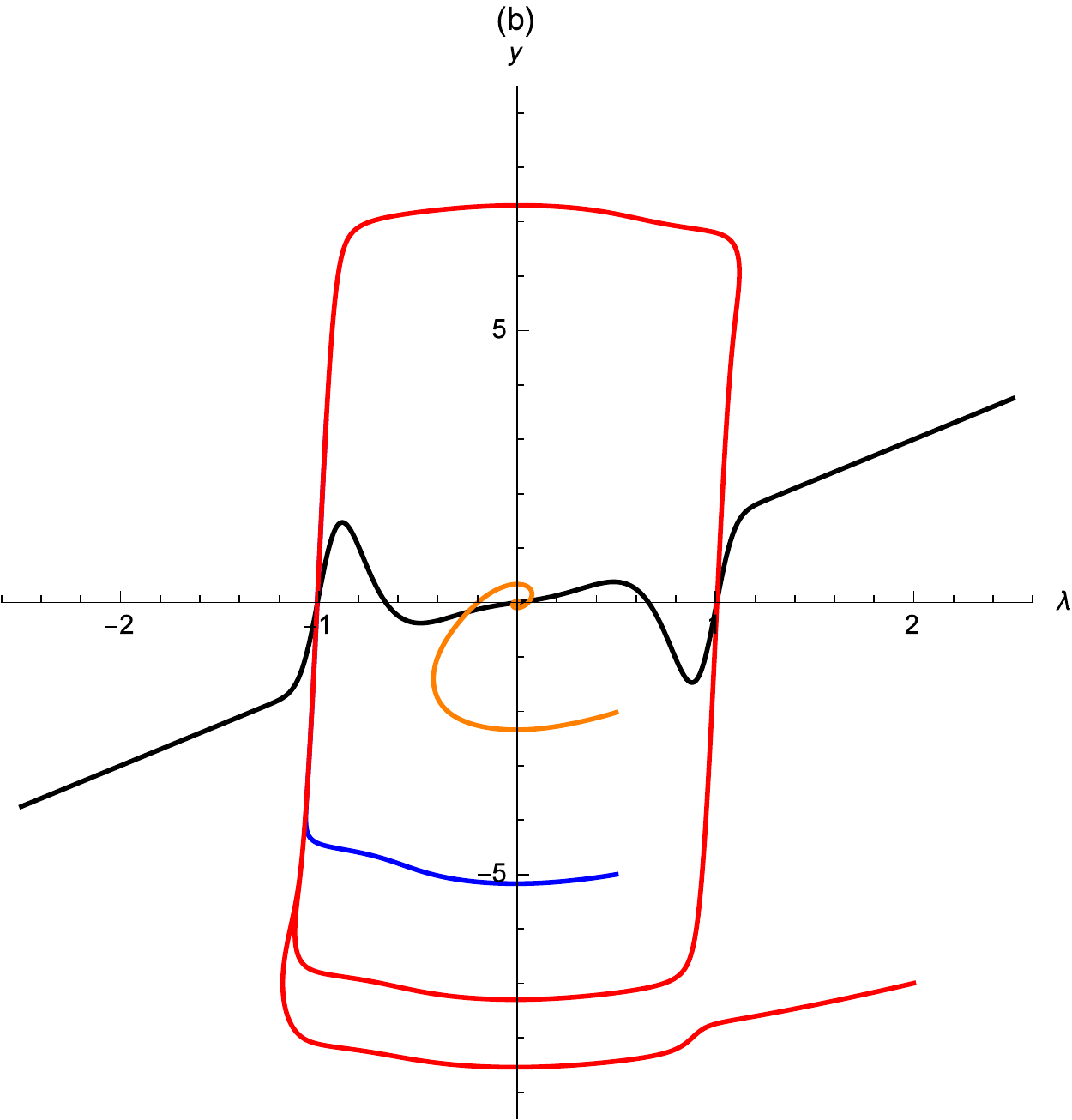}

  \includegraphics[width=0.45\linewidth]{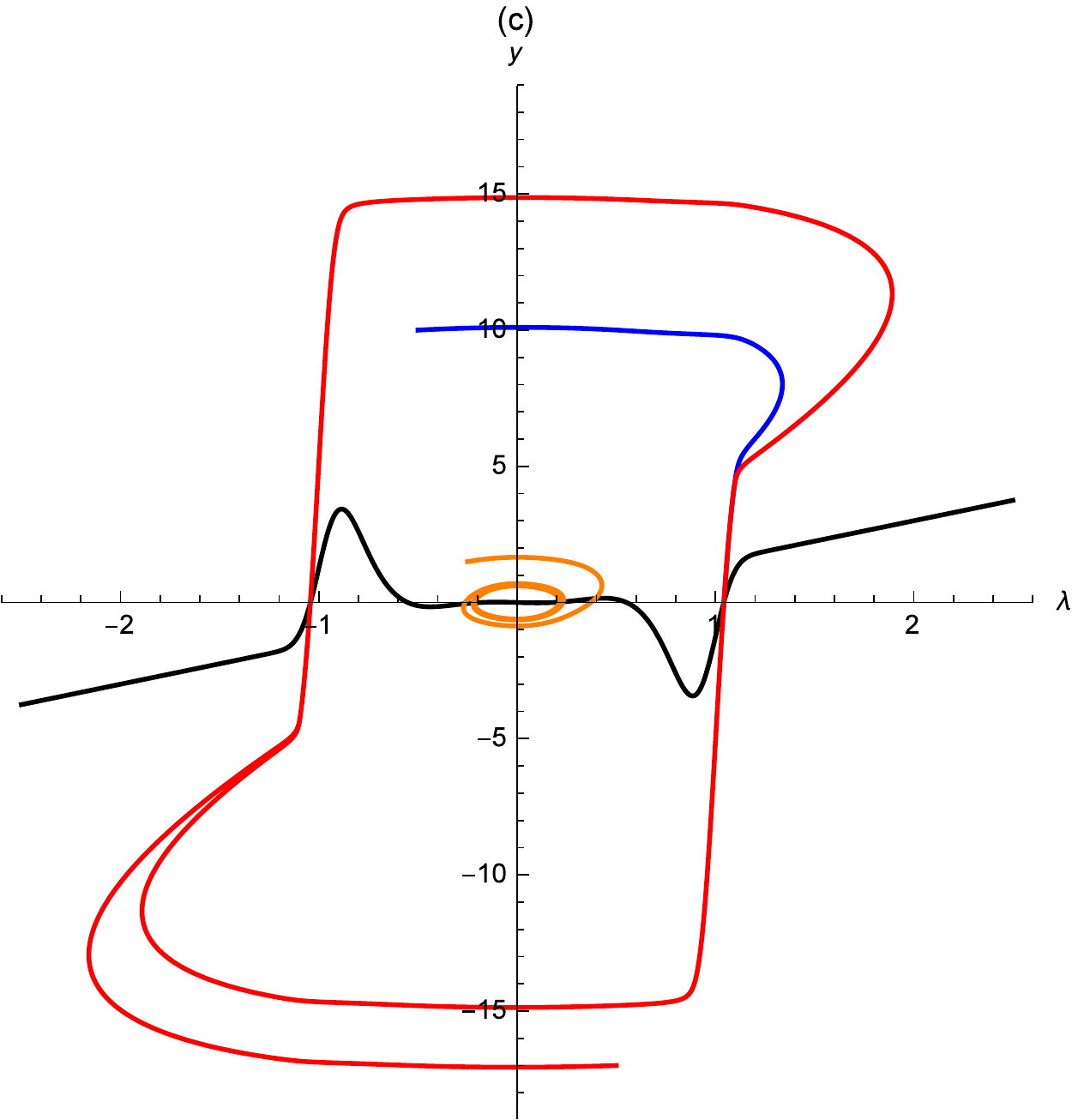}
  \includegraphics[width=0.45\linewidth]{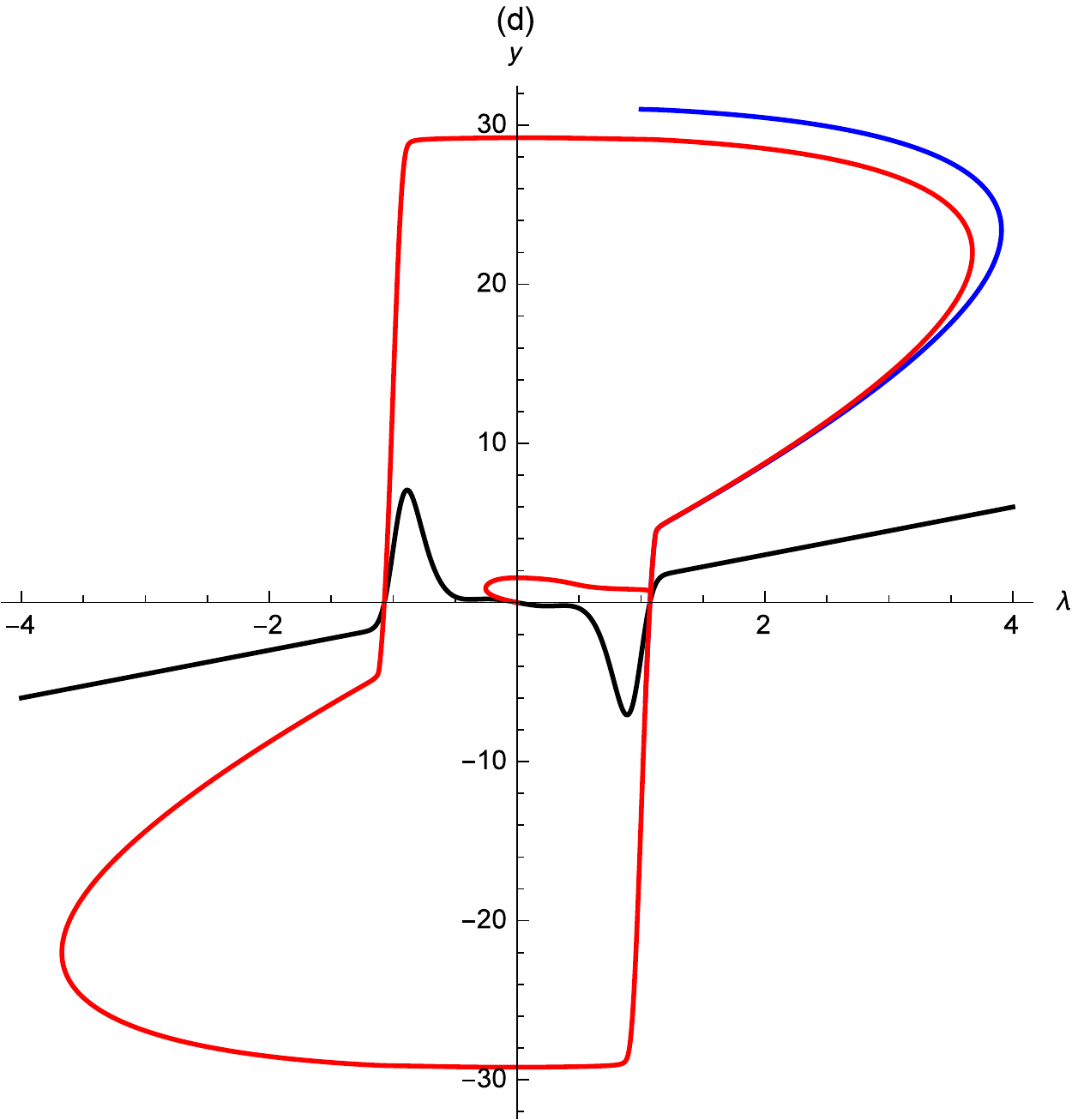}
      % \caption{(a)}
     \caption{\small Different regimes for case A. In all the pictures, the black line represents the graph of $y=f_{\alpha,\beta}(\lambda)$ and we fixed $\alpha=\sigma^2=1$. In (a), the regime $\beta<\hat{\beta}_1(\alpha)$ ($\beta=1.2$): the red line represents the solution starting from $\lambda(0)=1$, $y(0)=4$, which is definitely attracted by the globally stable origin. In (b), the regime $\beta \in (\hat{\beta}_1,\hat{\beta}_2)$ ($\beta =2$): the red-colored solution, starting from  $\lambda(0)=2$, $y(0)=-7$, and the blue-colored solution, starting from $\lambda(0)=0.5$, $y(0)=-5$, are attracted by a stable limit cycle. Here, the origin is locally stable (the orange-colored solution with initial condition $\lambda(0)=0.5$, $y(0)=-2$ is attracted by it) and its basin of attraction is surrounded by an unstable limit cycle. In (c), the regime $\beta \in (\hat{\beta}_2,\hat{\beta}_3)$ ($\beta =3.4$): the red and blue lines, here representing solution starting from $\lambda(0)=0.5$, $y(0)=-17$ and $\lambda(0)=-0.5$, $y(0)=10$ respectively, are again attracted by the outer cycle but now the origin is unstable and another stable cycle is born via the Hopf bifurcation. The orange-colored solution, with initial condition $\lambda(0)=-0.25$, $y(0)=1.5$, is attracted by the smallest cycle. The basins of attraction of the stable orbits is separated by an unstable cycle.  In (d), the regime $\beta >\hat{\beta}_3$ ($\beta =6$): only the external orbit is survived and it has become globally attractive, as shown by the red and blue lines, with initial conditions $\lambda(0)=0$, $y(0)=-0.005$ and $\lambda(0)=1.5$, $y(0)=31$ respectively. }\label{fig:casoA1}
\end{figure}

\begin{itemize}
\item[-] For $\beta<\hat\beta_1(\alpha)$ the origin is a global attractor. Notice that, numerically we can see that $\hat\beta_1(\alpha)$ is greater that the $\beta^*$ obtained in Theorem~\ref{thm:dyn_system_with_diss}; indeed it is not necessary that the function $f_{\alpha,\beta}(x)$ is strictly greater than zero for all $x>0$. It is reasonable to believe, see Figure \ref{fig:casoA1}(a), that for those $\beta$ such that the negative part of the function is ``small enough'' do not necessary give rise to a periodic orbit.
\item[-] For $\beta$~$\in$~$(\hat\beta_1(\alpha),\hat\beta_2(\alpha))$ the origin is locally stable and, through a global bifurcation, two periodic orbits have arised, the larger one is stable and the smaller one is unstable. Indeed, we can prove numerically that there exists a value $\beta^*$~$\in$~$(\hat\beta_1(\alpha),\hat\beta_2(\alpha))$ such that the function $f_{\alpha,\beta^*}$ satisfies the conditions of Theorem~A and~B in~\cite{Od96} for the existence of exactly two limit cycles. See Figure \ref{fig:casoA1}(b) in which we can observe the stable outer cycle and the attractivity of the origin.
\item[-] Notice that $\beta_H=\hat\beta_2(\alpha)$ from Theorem~\ref{thm:dyn_system_with_diss}. Therefore, for $\beta$~$\in$~$(\hat\beta_2(\alpha),\hat\beta_3(\alpha))$ we observe two attractive limit cycles, a smaller one spreading from the origin (that is now unstable) while the bigger one remains from the previous regime: see Figure \ref{fig:casoA1}(c). The basin of attraction of the two stable orbits are separated by a third unstable periodic orbit. This is the regime in which we see the coexistence of two stable periodic orbits, one inside the other. The existence of this regime is again a consequence of Theorem~A and~B in~\cite{Od96}, because we can numerically find a $\beta^{**}$  that satisfies the hypothesis for the existence of exactly three limit cycles, two stable and one unstable.
\item[-] For $\beta>\hat\beta_3(\alpha)$ we see that only the largest periodic orbit has survived. Indeed, for in $\hat\beta_3(\alpha)$ the smallest stable orbit and the unstable one collapse and disappear. Of course, we see that this $\hat\beta_3(\alpha)=\beta_{UC}$ defined in Theorem~\ref{thm:dyn_system_with_diss}; but from numerical evidence we suppose that for this value of $\beta$ the  function $f_{\alpha,\beta}$ has more than one single zero in the positive half-line, but that the other two zeros are not distant enough to admit the existence of the two inner orbits. Of course when $\beta$ is such that there exists a unique positive zero for $f_{\alpha,\beta}$, we analitically prove the existence and uniqueness of the limit cycle (see Theorem~\ref{thm:dyn_system_with_diss}) while for lower values we can only show it numerically, see Figure\ref{fig:casoA1}(d).
\end{itemize}

\subsubsection{Case B: triplet $\left(1,1,0\right)$} 
We see in Figure~\ref{fig:g} that the shape of $g'$ basically allows three different regimes for the case without dissipation. Indeed we have $\beta_1<\beta_2=\frac{2}{\sigma^2}$ and the three regimes are the following:
\begin{itemize}
\item[-] for $\beta<\beta_1$ the origin is a global attractor;
\item[-] for $\beta$~$\in$~$(\beta_1,\beta_2)$ there are five equilibrium points $-x_2<-x_1<0<x_1<x_2$, where $\pm x_1$ are unstable, while the others are stable;
\item[-] at $\beta=\beta_1$ the two points $\pm x_1$ collapse in the origin that becomes unstable, such that for $\beta>\beta_1$ the origin is unstable and the points $\pm x_2$ are stable.
\end{itemize}
We treat this example in the dissipated case \eqref{ODE_diss} (by means of the Li\'enard system \eqref{ODE_lienard_change_variable}). We expect three regimes and, in particular, we will observe an atypical behavior at the Hopf bifurcation, where we will not have a small limit cycle bifurcating from the origin, but the already existing stable limit cycle that becomes a global attractor. In Figure \ref{fig:casoB} we compare the regimes immediately below and above the Hopf bifurcation.

\begin{figure}[h!] 
\centering
   \includegraphics[width=0.45\linewidth]{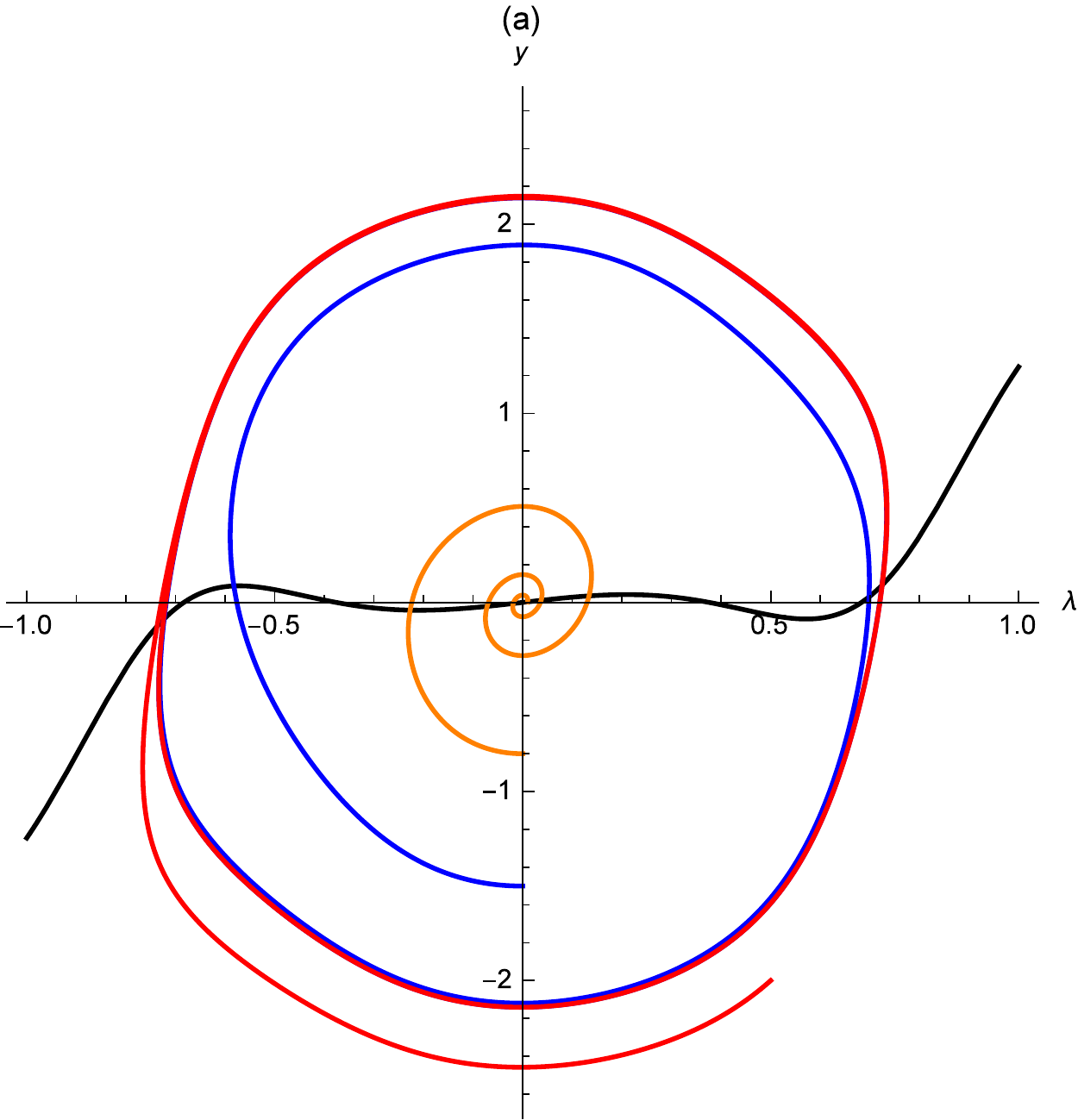}
   \includegraphics[width=0.45\linewidth]{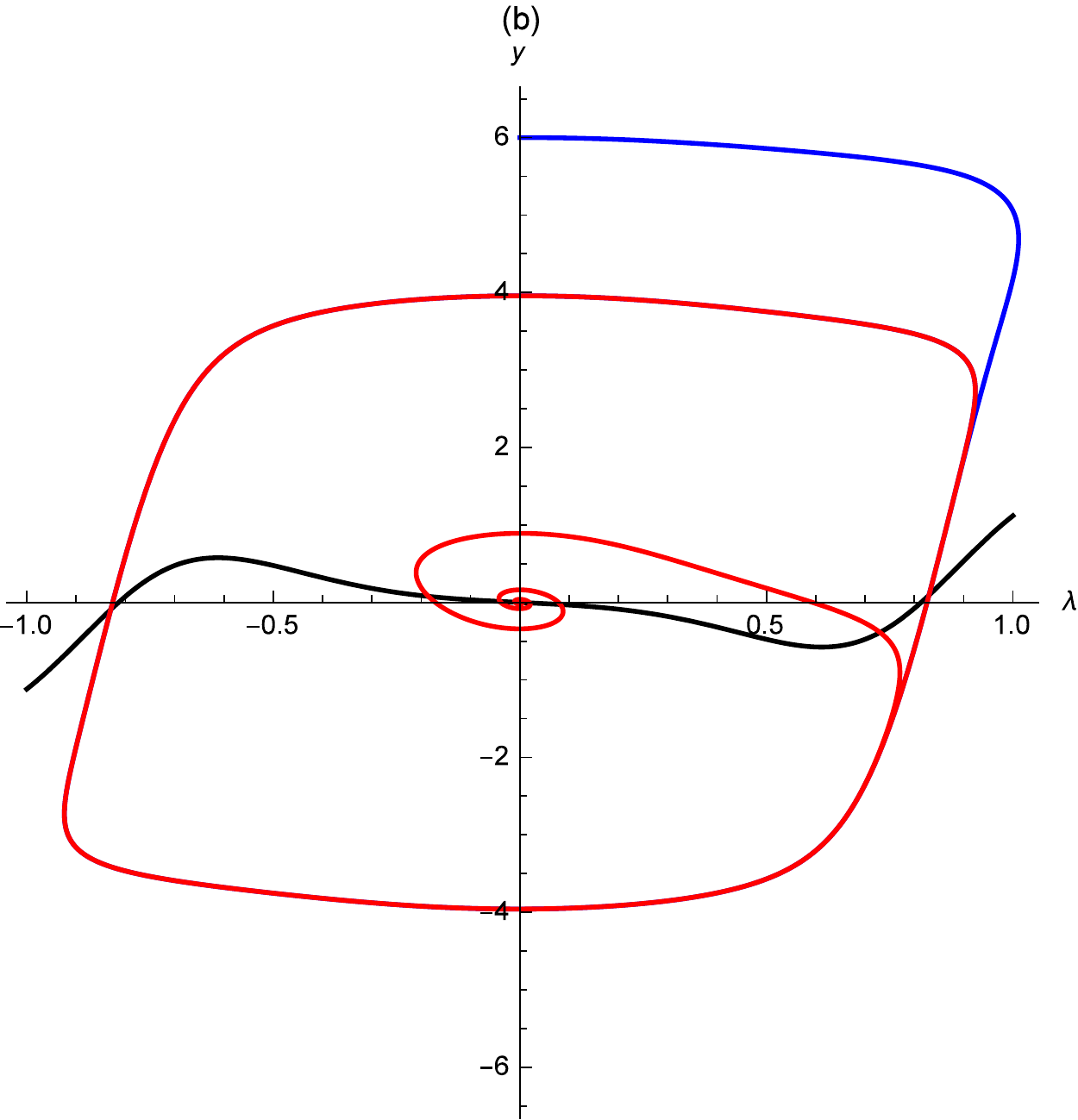}
   \caption{\small Different regimes for case B close to the Hopf bifurcation. In both pictures, the black line represents the graph of $y=f_{\alpha,\beta}(\lambda)$ and we fixed $\alpha=\sigma^2=1$. In (a), the regime $\beta\in(\hat{\beta}_1(\alpha),\beta_H)$ ($\beta=1.2$): the situation is qualitatively the same of Figure \ref{fig:casoA1}(b). The red, blue and orange lines represent solution starting from $\lambda(0)=0.5$, $y(0)=-2$, $\lambda(0)=0$, $y(0)=-1.5$ and $\lambda(0)=0$, $y(0)=-0.8$ respectively. In (b), the regime $\beta>\beta_H$ ($\beta=1.8$): the system has undergone through a Hopf bifurcation but the stable limit cycle spreading from the origin is not present here, due to the collapse of the unstable cycle in the origin, leaving the outer orbit to become globally attractive. The red-colored and blue-colored solutions have initial conditions $\lambda(0)=0$, $y(0)=-0.005$ and $\lambda(0)=0$, $y(0)=6$ respectively. } \label{fig:casoB}
\end{figure}

\begin{itemize}
\item[-] For $\beta<\beta_1(\alpha)$, the origin is a global attractor. As is Case~A the value $\beta_1(\alpha)$ is strictly greater than the value in which the line first touches the graph $y=g'(x)$.
\item[-] For $\beta$~$\in$~$(\beta_1(\alpha), \beta_H)$ the origin is stable and we have an unstable periodic orbit contained in a stable periodic one. When $\beta$ increase the inner orbit shrinks and the outer expands.
\item[-] For $\beta=\beta_H$ the Hopf bifurcation is such that the origin looses stability, but this happens simultaneously to the collapse of the unstable periodic orbit on it. Therefore, after the bifurcation, we do not see the usual periodic orbit expanding form the origin because the unique orbit is the stable one (from the previous regime) that becomes globally stable.
\end{itemize}
This case is interesting because the Hopf bifurcation do not originates a small periodic orbit. However, the phenomenon is still a local one, because it is a small unstable orbit that collapses on the origin changing its stability.

\section{Proof of Theorem~\ref{THM:convergencegaussianmeasure2}}\label{Sec:proof_main_theorem}

As we mentioned before stating Theorem~\ref{THM:convergencegaussianmeasure2}, under Assumptions~\ref{assumption_gen} and~\ref{assumption_gauss}, the behavior of \eqref{nonlinear_limit_gaussian} is completely described by the system of \mbox{ODE}:
% n presence of a dissipative behavior for $\lambda_t$, the system of \mbox{ODE} that we study is the following:
\begin{equation}\label{ODE_diss_with_v}
\left\{
\begin{array}{l}
\dot{m_t}=\frac{\beta}{2}g'(\lambda_t)-\frac{m_t}{2\sigma_2},\\
\dot{v_t}=1-\frac{v_t}{\sigma^2},\\
\dot{\lambda_t}=-\alpha \lambda_t +\frac{\beta}{2} g'(\lambda_t)-\frac{m_t}{2\sigma^2}.\\
\end{array}
\right.
\end{equation}
By the independence of the evolution of $v_t$, we
study the two-dimensional system 
\begin{equation}\label{ODE_diss}
\left\{
\begin{array}{l}
\dot{m_t}=\frac{\beta}{2}g'(\lambda_t)-\frac{m_t}{2\sigma_2},\\
\dot{\lambda_t}=-\alpha \lambda_t +\frac{\beta}{2} g'(\lambda_t)-\frac{m_t}{2\sigma^2}.\\
\end{array}
\right.
\end{equation}
instead of the three-dimensional~\eqref{ODE_diss_with_v}. By the change of variable $y=\frac{1}{2\sigma^2}(\lambda-m)$ we see that \eqref{ODE_diss} is equivalent to the  Li\'enard system \eqref{ODE_lienard_change_variable} and we study its behavior.

\begin{theorem}\label{thm:dyn_system_with_diss}
Fix $\sigma^2>0$ and $\alpha>0$ and consider the dynamical system \eqref{ODE_lienard_change_variable} under Assumptions~\ref{assumption_gen} and~\ref{assumption_gauss}.
\begin{itemize}
\item[i)] There exists $\beta^*>0$ such that $\forall$~$\beta$~$\in$~$(0,\beta^*)$ the origin is a global attractor for \eqref{ODE_lienard_change_variable}.
\item[ii)] If $g''(0)>0$ { and $g'$ is not linear in 0}, the origin looses stability via a Hopf bifurcation at the critical value $\beta_H=\frac{2\alpha+\frac{1}{\sigma^2}}{g''(0)}.$
\item[iii)] If $g''(0)>0$ and there exists $C>0$ such that for all $x\in(C,\infty)$ the function $g'(x)$ is concave, then there exists a $\beta_{UC}$ such that for all $\beta>\beta_{UC}$ there exists a unique limit cycles for \eqref{ODE_lienard_change_variable}.
\end{itemize}
\end{theorem}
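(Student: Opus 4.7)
The plan is to address the three parts separately, each exploiting the Liénard structure of \eqref{ODE_lienard_change_variable} in a different way. For part \emph{i)}, I would use the Lyapunov function $V(y,\lambda) = y^2/2 + (\alpha/(4\sigma^2))\lambda^2$, for which a direct computation along the flow gives $\dot V = -(\alpha/(2\sigma^2))\,\lambda\, f_{\alpha,\beta}(\lambda)$. Because $g$ is even we have $g'(0)=0$, and the Lipschitz hypothesis ($|g'(\lambda)| \le L|\lambda|$) forces $\lambda f_{\alpha,\beta}(\lambda) \geq \bigl(\alpha + \tfrac{1}{2\sigma^2} - \tfrac{\beta L}{2}\bigr)\lambda^2$. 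Taking $\beta^* := (2\alpha + 1/\sigma^2)/L$ makes $\dot V < 0$ outside the $y$-axis; on the $y$-axis itself the dynamics immediately pushes orbits away, so LaSalle's invariance principle (with $V$ radially unbounded) concludes global attraction of the origin.

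For part \emph{ii)}, I would linearize at the origin: the Jacobian of the right-hand side of \eqref{ODE_lienard_change_variable} has determinant $\alpha/(2\sigma^2) > 0$ and trace $-f'_{\alpha,\beta}(0) = \tfrac{\beta}{2}g''(0) - (\alpha + \tfrac{1}{2\sigma^2})$, which vanishes exactly at $\beta_H$. At this value the eigenvalues are $\pm i\sqrt{\alpha/(2\sigma^2)}$, and the transversality condition $\tfrac{d}{d\beta}\mathrm{Re}\,\mu(\beta)\big|_{\beta_H} = g''(0)/4 > 0$ holds. The hypothesis that $g'$ is not linear at $0$ guarantees a non-vanishing first focal value in the normal-form reduction, so that the classical Hopf bifurcation theorem applies: the origin loses stability at $\beta_H$ through a genuine (non-degenerate) Hopf bifurcation.

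Part \emph{iii)} is the technical heart of the theorem and would proceed by reducing to a uniqueness theorem for Liénard systems, such as the one in Odani~\cite{Od96}. First I would note that $f_{\alpha,\beta}$ is odd (because $g'$ is) and, using $f''_{\alpha,\beta}(x) = -(\beta/2)g'''(x)$ together with concavity of $g'$ on $(C,\infty)$ (which forces $g''' \le 0$ there), that $f_{\alpha,\beta}$ is convex on $(C,\infty)$. Next, for $\beta$ large enough, I would show that $f_{\alpha,\beta}$ possesses a \emph{unique} positive zero $a(\beta)$, lying in $(C,\infty)$, and that $f'_{\alpha,\beta}(a(\beta)) > 0$, so that $f_{\alpha,\beta}$ is strictly monotone increasing on $[a(\beta),\infty)$. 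With these structural properties in place, together with the sign condition on the restoring term $\tfrac{\alpha}{2\sigma^2}\lambda$, the hypotheses of the Liénard uniqueness theorem of~\cite{Od96} are satisfied, yielding at most one limit cycle; existence then follows from the Poincaré--Bendixson theorem, since the origin is the unique equilibrium and for $\beta > \beta_H$ it is a repelling focus, while the Lyapunov function $V$ furnishes a positively invariant annular region trapping orbits.

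The main obstacle will be ensuring that, for large $\beta$, the unique positive zero $a(\beta)$ of $f_{\alpha,\beta}$ actually lies beyond $C$ (so that convexity and monotonicity activate there) and that no spurious positive zeros arise from the behaviour of $g'$ on $[0,C]$. The assumption $g''(0)>0$ makes $f'_{\alpha,\beta}(0)$ arbitrarily negative for large $\beta$, pushing $a(\beta)\to\infty$, but a careful quantitative analysis---tracking the interplay between $\tfrac{\beta}{2}g''(0)$ dominating near $0$ and the concavity-induced asymptotic linearity of $f_{\alpha,\beta}$ at infinity---will be needed to identify $\beta_{UC}$ explicitly enough for the Liénard uniqueness hypotheses to apply.
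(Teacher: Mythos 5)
Your proposal follows essentially the same route as the paper: part \emph{i)} uses the identical Lyapunov function $W(y,\lambda)=\frac{\alpha}{4\sigma^2}\lambda^2+\frac{y^2}{2}$ (your Lipschitz bound just makes the paper's choice of $\beta^*$ explicit), part \emph{ii)} is the same linearization/eigenvalue-crossing argument, and part \emph{iii)} establishes the same structural picture of $f_{\alpha,\beta}$ for large $\beta$ (odd, a single positive zero, negative before it, positive and increasing after) before invoking classical Li\'enard uniqueness theory. The only divergence is in packaging: the paper cites Theorem~1.1 of~\cite{CaVi05} and gets existence and uniqueness together from the monotonicity of the energy difference $\Delta W(y_0)=W(0,y_1)-W(0,y_0)$ along return trajectories, whereas you cite~\cite{Od96} for uniqueness and appeal to Poincar\'e--Bendixson for existence --- note that since $\dot W$ changes sign across $\lambda=x^*$, no level set of $W$ is itself a trapping boundary, so the invariant annulus you invoke must in any case be built from the $\Delta W$ return-map argument the paper spells out.
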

\begin{proof} \emph{i)} Let us consider the function
$$W(y,\lambda)=\frac{\alpha}{4\sigma^2}\lambda^2+\frac{y^2}{2},$$
it is clear that 
\begin{equation}\label{Lyapunov}\frac{d}{dt}W(y(t),\lambda(t))=-\frac{\alpha}{2\sigma^2}\lambda \left((\alpha+\frac{1}{2\sigma^2})\lambda-\frac{\beta}{2}g'(\lambda)\right).
\end{equation}
%As in the proof of Proposition~\ref{Prop:m_t}, 
We are left to consider the intersection of the graph of the function $y=g'(\lambda)$ with the line  $y=\frac{2\alpha+\frac{1}{\sigma^2}}{\beta}\lambda$. We see that there exists a $\beta^*$ sufficiently small, such that $\forall$ $\beta<\beta^*$ the only intersection is the origin, meaning that \eqref{Lyapunov} is strictly negative except than at $(0,0)$, in  which it is zero. Therefore $W$ is a global Lyapunov function, proving global attractivity of the origin.\\
\emph{ii)} We consider the system \eqref{ODE_lienard_change_variable} linearized around the point $(0,0)$, that gives the linear system:
$$
\left(\begin{array}{c}
\dot{y}\\
\dot{\lambda}
\end{array}\right)=\left(\begin{array}{ccc}
0&\text{ }&-\frac{\alpha}{2\sigma^2}\\
1&\text{ }&-\left(\alpha+\frac{1}{2\sigma^2}\right)+\frac{\beta}{2}g''(0)
\end{array}\right)\left(\begin{array}{c}
y\\
\lambda
\end{array}\right)$$
with eigenvalues
$$
x_{\pm}=\frac{1}{2}\left(\frac{\beta}{2}g''(0)-\alpha-\frac{1}{2\sigma^2}\pm\sqrt{\left(\frac{\beta}{2}g''(0)-\alpha-\frac{1}{2\sigma^2}\right)^2-\frac{2\alpha}{\sigma^2}}\right).$$
{When $\beta=\beta_H$, the eigenvalues $x_{\pm}$ constitute a pair of conjugate non-zero purely imaginary numbers, crossing the imaginary line with positive velocity. The assumption on $g'$ being not linear around 0 assures that we are in presence of a local bifurcation, and we can conclude that when $\beta=\beta_H$ we have a Hopf bifurcation.} \\
\emph{iii)} {Recall that under Assumption \ref{assumption_gen}, $g'$ is an odd function positive on $(0,\infty)$. Moreover, by Assumption \ref{assumption_gauss}, in the Gaussian case $g'$ has to be definitively sub-linear (it is clear by \eqref{integrabilita_g}). If, in addition, $g''(0)>0$ and there exists $C>0$ such that for all $x\in(C,\infty)$ the function $g'$ is concave, then there exists a $\beta_{UC}$ sufficiently large such that $\forall$~$\beta>\beta_{UC}$, the function $f_{\alpha,\beta}$ has exactly three zeros $-x^*<0<x^*$ and satisfies the following: $f_{\alpha,\beta}$ is negative on $(0,x^*)$ and positive and monotonically increasing on $(x^*,\infty)$.} In this way, for all $\beta>\beta_{UC}$, the system~\eqref{ODE_lienard_change_variable} satisfies the conditions for the existence and uniqueness of a limit cycle presented in Theorem~1.1 of~\cite{CaVi05}. The proof follows the usual approach for Li\'enard systems, used also in~\cite{DaFiRe13}.\\
First %it is shown that all the trajectories revolve around the origin. This is a consequence of the fact 
it is shown that the $y$ axis and the function $y=f_{\alpha,\beta}(\lambda)$ divides the $(\lambda,y)$-plane in four regions: 
\begin{align*}
I\doteq&\{(\lambda,y)\colon \lambda>0;y>f_{\alpha,\beta}(\lambda)\};\\
II\doteq&\{(\lambda,y)\colon \lambda>0;y<f_{\alpha,\beta}(\lambda)\};\\
III\doteq&\{(\lambda,y)\colon \lambda<0;y<f_{\alpha,\beta}(\lambda)\};\\
IV\doteq&\{(\lambda,y)\colon \lambda<0;y>f_{\alpha,\beta}(\lambda)\}.
\end{align*} 
In each of these four regions the vector field pushes the  trajectories to cross either the $y$ axis or the graph $y=f_{\alpha,\beta}(\lambda)$. Therefore each trajectory is forced to revolve clockwise around the origin.\\
Then, for $y_0>0$, we consider a trajectory starting from the point $(0,y_0)$ and we call $y_1>0$ its first intersection with the $y$-axis in the negative half-plane. We define the function 
$$
\Delta W(y_0)=W(0,y_1)-W(0,y_0).$$
Of course, when $\Delta W(\bar y)=0$, the trajectory starting from $(0,\bar y)$ is a periodic orbit. Let  $y^*_0>0$ be such that the trajectory starting at time zero from $(0,y^*_0)$ passes through $(x^*,0)$, the positive zero of the function $f_{\alpha,\beta}(\lambda)$. Then, let us define $t^*$ the time in which this trajectory crosses the $y$-axis, this time it must be $y(t^*)<0$, by the above observation. Notice that, with this notation and by~\eqref{Lyapunov_ODE}, we have that
\begin{equation}\label{W_int}
\Delta W(y^*)=\int_0^{t^*}\frac{d}{dt}W(y(t),\lambda(t))dt=-\int_{0}^{t^*}\frac{\alpha}{2\sigma^2}\lambda(t) f_{\alpha,\beta}(\lambda(t))dt.
\end{equation}
This trajectory is such that, for all $t\in[0,t^*]$, we have that $\lambda(t)~\in~[0,x^*]$, since in $(x^*,0)$ it moves from the region $I$ of the plane to the region $II$, in which the $\lambda$-coordinate starts to decrease. Moreover, it is clear that, for all $\lambda$~$\in$~$(0,x^*)$ the function $\lambda f_{\alpha,\beta}(\lambda)$ is strictly negative, due to the sign of $f_{\alpha,\beta}$, that is the reason why the integral in~\eqref{W_int} is strictly positive. Clearly, the same holds for all the trajectories closer than this one to the origin, i.e. $\Delta W(y)>0$ for all $y< y^*$. Then, since the function $f_{\alpha,\beta}(\lambda)$ remains positive for all $\lambda>x^*$, we see that  $\Delta W(y)$ decreases monotonically to $-\infty$, when $y\rightarrow\infty$, meaning that there exists a unique $\bar y$ for which it is zero.\\

%It is clear that there exists a $\beta_{UC}$ sufficiently large such that $\forall$~$\beta>\beta_{UC}$, the function $f_{\alpha,\beta}$ has exactly three zeros $-x^*<0<x^*$ and satisfies the following: $f_{\alpha,\beta}$ is negative on $(0,x^*)$ and positive and monotonically increasing on $(x^*,\infty)$. In this way, for all $\beta>\beta_{UC}$, the system~\eqref{ODE_lienard} satisfies the conditions for the existence and uniqueness of a limit cycle presented in Theorem~1.1 of~\cite{CaVi05}. The proof follows the usual approach for Li\'enard systems, used also in~\cite{DaFiRe13}. First it is shown that all the trajectories revolve around the origin. Then it is defined the function $$\Delta W(y_0)=W(0,y_1)-W(0,y_0),$$where $y_0$ and $y_1$ are the intersection of a trajectory with the $y$-axis, in the positive and the negative half-plane, respectively. Of course, when $\Delta W(y_0^*)=0$, the trajectory starting from $(0,y^*_0)$ is a limit cycle. Thus, it is proved that $\Delta W(y_0)>0$ for all $y_0$ sufficiently small and then it decreases monotonically to $-\infty$, when $y_0\rightarrow\infty$, meaning that there exists a unique $y_0^*$ for which it is zero.
\end{proof}

%and the number of cycles of the system depends on the positive zeros of the function $$G_{\alpha,\beta}(x)=\left(\alpha+\frac{1}{2\sigma^2}\right) x -\frac{\beta}{2} g'(x).$$  Notice that the number of stability points in the system without dissipation depends indeed on the zeros of $G_{0,\beta}(x)$.
Now, we aim to extend the results on the dynamical system \eqref{ODE_diss}, to the stochastic process itself.

\begin{proof}[Proof of Theorem~\ref{THM:convergencegaussianmeasure2}]

%\begin{theorem} \label{convergencegaussianmeasure} Fix $\beta>0$, 
%and let $\Lambda(\beta)$ as in \eqref{zero_points}, then the process $(X_t,\lambda_t)$ described by \eqref{nonlinear_limit_gaussian} has exactly $Card(\Lambda(\beta))$ stationary solution given by the measures$$\mu^*_{m}(dx,dl)=\frac{1}{\sqrt{2\pi \sigma^2}}e^{-\frac{(x-m)^2}{2\sigma^2}}dx\times\delta_m(dl)$$for all $m$~$\in$~$\Lambda(\beta)$. Moreover, for all $\mu_0(dx,d\lambda)=\nu_0(dx)\times \delta_{m_0}(d\lambda),$ square-integrable initial conditions with $m_0=\langle \mu_0,x\rangle$\begin{equation}\label{long-time}\lim_{t\rightarrow \infty}\|\mu_t(\cdot)-\mu^*_{m}(\cdot)\|_{TV}=0,\end{equation}where $m$ is the equilibrium point of \eqref{dm_t} such that $m_0$ belongs to the domain of attraction of $m$.
%\end{theorem}

%\begin{proof}%[Proof of Theorem \ref{convergencegaussianmeasure}]
%It is clear that the evolution given by \eqref{nonlinear_limit_gaussian} when $\alpha=0$ must have a law $\mu_t(dx,dl)=\nu_t(dx)\delta_{m_t}(dl)$ where $\delta_{m_t}$ is a Dirac delta centered in $m_t=\int_{\mathbb{R}}y\nu_t(dy)$.
{Let us restrict to the class of measures of the form $\mu(dx,dl)=\nu(dx)\times\delta_{\lambda}(dl)$, for $\nu \in \mathcal{M}(\mathbb{R})$ and $\lambda\in\mathbb{R}$. Suppose that there exist a stationary measure for \eqref{nonlinear_limit_gaussian} in this class, call it $\mu^*(dx,dl)=\nu^*(dx)\times\delta_{\lambda^*}(dl)$, then it must satisfy}
\begin{equation}\label{stat_fokker_plack}
\left\{\begin{array}{l}
0=\frac{1}{2}\frac{d^2}{d x^2}\nu^*(x)- \frac{d}{d x}\left[\left(\frac{\beta}{2}g'(\lambda^*)-\frac{x}{2\sigma^2}\right)\nu^*(x)\right]\\
0=-\alpha \lambda^* +\frac{\beta}{2} g'(\lambda^*)-\frac{m^*}{2\sigma^2}.
\end{array}\right.
\end{equation}
with $m^*=\int_{\mathbb{R}}x\nu^*(x)dx$. Let us solve the first row in \eqref{stat_fokker_plack}, we see that there exists $K$~$\in$~$\mathbb{R}$ such that
$$
\frac{d}{dx}\nu^*(x)=K+\left(\frac{\beta}{2}g'(\lambda^*)-\frac{x}{2\sigma^2}\right)\nu^*(x).
$$
Thus $\nu^*(x)$ solves a linear ODE, i.e. there exists $C$~$\in$~$\mathbb{R}$ such that
\begin{equation}
\nu^*(x)=\exp(\beta g'(\lambda^*)x-\frac{x^2}{2\sigma^2})\left(C+K\int_{\mathbb{R}}\exp(-\beta g'(\lambda^*)y+\frac{y^2}{2\sigma^2})dy\right).
\end{equation} 
Let us define the values of the constants:
\begin{itemize}
\item $K=0$, because if $K\neq0$ $\nu^*(x)$ is not integrable;
\item $C=\int_{\mathbb{R}}\exp(\beta g'(\lambda^*)x-\frac{x^2}{2\sigma^2})dx$, such that $\nu^*$ has mass one.
\end{itemize} 
The admissible functions $\nu^*$ are such that 
\[
m^*=\int_{\mathbb{R}}x\nu^*(x)dx=\beta\sigma^2 g'(\lambda^*),
\]
but they must simultaneously satisfy the second row in~\eqref{stat_fokker_plack}, then the only solution is such that $(m^*,\lambda^*)=(0,0)$ and this gives the thesis. \\

Now, let us prove the long-time behavior of $\mu_t$ for any square-integrable initial condition of the type $\mu_0=\nu_0\times\delta_{\lambda_0}$. As we said, this implies that $\mu_t=\nu_t\times\delta_{\lambda_t}$ %and $m_t=\mathbf{E}[X_t]$ 
for all $t>0$. %Finally, we state and prove a lemma concerning the  long-time behavior of the 
To this aim we introduce the following time-inhomogeneous SDE:
\begin{equation}
\label{linearSDE}
\begin{cases}
dY_t=\frac{\beta}{2}g'(\lambda_t)dt - {Y_t\over 2 \sigma^2}dt+dB_t\\
Y_0\in L^2(\Omega)
\end{cases}
\end{equation}
where $\lambda_t$ solves~\eqref{ODE_diss} with initial condition $(m_0,\lambda_0)=(\langle x,\nu_0\rangle,\nu_0\rangle,\lambda_0)$. The solution of \eqref{linearSDE} will be used as an auxiliary process to prove long-behavior of the solution of \eqref{nonlinear_limit_gaussian}. Indeed it is clear that, for all $t\geq0$, $X_t$ in \eqref{nonlinear_limit_gaussian} has the same law of $Y_t$ in \eqref{linearSDE}, provided that $\nu_0=Law(Y_0)$. In the following Lemma we prove that, if $(\langle x,\nu_0\rangle,\nu_0\rangle,\lambda_0)$ belongs to the domain of attraction of a certain closed subset $\gamma\in\mathbb{R}^2$ (being that an equilibrium point or a closed orbit), than the law of $Y_t$ approaches a gaussian law with mean $m^*$, where $(m^*,\lambda^*)\in \gamma$ for a certain $\lambda^*$.
 %and $a(t)=\frac{\beta g'(m_t)}{2}$ for all $t>0$, where $m_t$ solves  \eqref{dm_t} with $m_0=\mathbf{E}[X_0]$. The same argument holds true when $\alpha>0$, replacing $m_t$ with $\lambda_t$.

\begin{lemma}\label{lem:linearSDE}
Let $\{Y_t\}$ be the solution of \eqref{linearSDE}, let $P_t(Y_0,\cdot)$ be its law and let $(m_0,\lambda_0)$ belong to the domain of attraction of $\gamma$ according to the dynamics~\eqref{ODE_diss}. Then, 
$$
\lim_{t\to+\infty} \inf_{(a,b)\in\gamma}||P_t(Y_0,\cdot)-\nu_{a}(\cdot)||_{TV}=0
$$
where %$P_t(Y_0,\cdot)=Law(Y_t)$  and 
$\nu_{a}(dx)={1\over \sqrt{2\pi \sigma^2}}e^{(x-a)^2\over 2\sigma^2}dx.$

\end{lemma}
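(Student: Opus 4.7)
The plan is to exploit the explicit solvability of the linear SDE \eqref{linearSDE}. By variation of constants,
\[
Y_t = e^{-t/(2\sigma^2)}Y_0 + F(t) + Z_t,\qquad F(t):=\int_0^t e^{-(t-s)/(2\sigma^2)}\tfrac{\beta}{2}g'(\lambda_s)\,ds,\qquad Z_t:=\int_0^t e^{-(t-s)/(2\sigma^2)}dB_s,
\]
with $Z_t$ Gaussian of mean zero and variance $\tau_t^2=\sigma^2(1-e^{-t/\sigma^2})\to\sigma^2$, independent of $Y_0$. Taking expectation, $m(t):=\mathbb{E}[Y_t]$ satisfies the first line of \eqref{ODE_diss} with initial datum $m_0$ and the given $\lambda_t$, so $m(t)$ coincides with the first coordinate of the ODE trajectory starting at $(m_0,\lambda_0)$. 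Analogously $\mathrm{Var}(Y_t)$ solves $\dot v_t = 1 - v_t/\sigma^2$ and converges to $\sigma^2$.

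Next I want a Gaussianization estimate. Conditional on $Y_0$, the law of $Y_t$ is $N\bigl(F(t)+e^{-t/(2\sigma^2)}Y_0,\tau_t^2\bigr)$, so $P_t(Y_0,\cdot)$ is a $\nu_0$-mixture of Gaussians of common variance $\tau_t^2$ whose means are contracted exponentially toward $m(t)$. Convexity of $\|\cdot\|_{TV}$ under mixtures, combined with the elementary bound $\|N(\mu_1,\tau^2)-N(\mu_2,\tau^2)\|_{TV}\le |\mu_1-\mu_2|/(\tau\sqrt{2\pi})$, then yields
\[
\|P_t(Y_0,\cdot)-N(m(t),\tau_t^2)\|_{TV}\le \frac{e^{-t/(2\sigma^2)}}{\tau_t\sqrt{2\pi}}\int |y-m_0|\,\nu_0(dy)\xrightarrow{t\to\infty}0,
\]
using square-integrability of $\nu_0$ and $\tau_t\to\sigma>0$.

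Finally, since $(m_0,\lambda_0)$ lies in the domain of attraction of the compact set $\gamma$, I can pick $(a_t,b_t)\in\gamma$ with $|(m(t),\lambda_t)-(a_t,b_t)|\to 0$; in particular $|m(t)-a_t|\to 0$. Combining this with $\tau_t\to\sigma$ and the continuity of the TV distance between Gaussian measures in the mean and variance parameters (away from zero variance), I get $\|N(m(t),\tau_t^2)-\nu_{a_t}\|_{TV}\to 0$, and the triangle inequality concludes.

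I expect the Gaussianization step to be the main obstacle: since $\nu_0$ is merely square-integrable and in general non-Gaussian, $P_t(Y_0,\cdot)$ stays non-Gaussian for all finite $t$, so one has to rule out that the deviations from a single Gaussian persist. What saves the argument is the combined effect of the dissipative drift $-Y_t/(2\sigma^2)$, which contracts the spread of the initial data exponentially, and the persistent Gaussian smoothing by $Z_t$, whose variance stays uniformly bounded below for $t$ away from zero; everything else is standard triangle-inequality bookkeeping based on continuity of the TV distance between Gaussian families.
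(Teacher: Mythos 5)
Your proposal is correct and follows essentially the same route as the paper: both exploit the explicit Gaussian transition kernel of the linear SDE, identify $\mathbb{E}[Y_t]$ (equivalently, the drift integral $F(t)=m_t-e^{-t/(2\sigma^2)}m_0$) with the first coordinate of the ODE trajectory, and then show the $\nu_0$-mixture of Gaussians converges in total variation to $\nu_{a_t}$ for suitably chosen $(a_t,b_t)\in\gamma$. Your version is somewhat more explicit than the paper's at the final step, where you make the convergence of the double integral of densities quantitative via convexity of the TV norm under mixtures and the bound $\|N(\mu_1,\tau^2)-N(\mu_2,\tau^2)\|_{TV}\le |\mu_1-\mu_2|/(\tau\sqrt{2\pi})$, but this is a refinement of the same argument rather than a different one.
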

\begin{proof}[Proof of Lemma~\ref{lem:linearSDE}]
First of all, notice that
\begin{equation}
\label{limitebanale}
\lim_{t\to+\infty} \inf_{(a,b)\in\gamma}\left| \int_0^t e^{s-t\over 2\sigma^2} \frac{\beta}{2}g'(\lambda_s) ds-a\right|=0.
\end{equation}
Fix $t>0$, and $(a,b)\in\gamma$ then 
{\small\begin{align*}
 \int_0^t e^{s-t\over 2\sigma^2} \frac{\beta}{2}g'(\lambda_s) ds-a=& \int_0^t e^{s-t\over 2\sigma^2} \frac{\beta}{2}g'(\lambda_s) ds-e^{-\frac{t}{2\sigma^2}}\int_0^te^{\frac{s}{2\sigma^2}}\frac{m_s}{2\sigma^2}ds\\
 &+e^{-\frac{t}{2\sigma^2}}\int_0^te^{\frac{s}{2\sigma^2}}\frac{m_s}{2\sigma^2}ds-a
%\left(e^{-\frac{t}{2\sigma^2}}\int_0^te^{\frac{s}{2\sigma^2}}\frac{1}{2\sigma^2}ds+e^{-\frac{t}{2\sigma^2}}\right)
\\
 =&\int_0^t e^{s-t\over 2\sigma^2}\dot{m}_s ds+e^{-\frac{t}{2\sigma^2}}\int_0^te^{\frac{s}{2\sigma^2}}\frac{m_s}{2\sigma^2}ds-a\\
 =&m_t-a-e^{-\frac{t}{2\sigma^2}}m_0.
\end{align*}}
Since $(m_0,\lambda_0)$ belongs to the domain of attraction of $\gamma$, this implies~\eqref{limitebanale}.
Then, if $\mu_0(\cdot)=Law(Y_0)$,
\begin{multline*}\inf_{(a,b)\in\gamma}||P_t(Y_0,\cdot)-\nu_{a}(\cdot)||_{TV}=\inf_{(a,b)\in\gamma}\int_{\mathbb{R}} ||P_t(y,\cdot)-\nu_{a}(\cdot)||_{TV} d\nu_0( y)\\
= \inf_{(a,b)\in\gamma}{1\over 2}\int_{\mathbb{R}}\int_{\mathbb{R}} \left| {\exp\left({\left( x-ye^{-{t\over 2\sigma^2}} -\int_0^t e^{s-t\over 2\sigma^2} \frac{\beta}{2}g'(\lambda_s) ds  \right)^2\over 2\sigma^2(1-e^{-{t\over 2\sigma^2}})}\right)\over \sqrt{2\pi\sigma^2(1-e^{-{t\over 2\sigma^2}})}}  -  {e^{(x-a)^2\over 2\sigma^2}\over \sqrt{2\pi \sigma^2}} \right|dx d\nu_0(y)
\end{multline*} 
which converges to $0$ as $t\to+\infty$ thanks to \eqref{limitebanale}. 
\end{proof}
As we said, if $\nu_0=Law(Y_0)$,  $X_t$ in \eqref{nonlinear_limit_gaussian} has the same law of $Y_t$ in \eqref{linearSDE} for all $t\geq0$.  Then \eqref{long-time_origin_stable} and \eqref{long-time2} follows directly from Theorem~\ref{thm:dyn_system_with_diss} and Lemma~\ref{lem:linearSDE}.
\end{proof}

\section{Proof of the McKean-Vlasov limit}\label{SEC:prop_chaos}
In this section we collect the standard proofs of well-posedness of the nonlinear Markov process~\eqref{nonlinear_limit} and we identify its law as the weak limit of the sequence of empirical measures of~\eqref{lambda}.
Indeed, in \eqref{lambda} we describe the time-evolution of a stochastic process $(X^N(t),\lambda^N(t))_{t\geq0}$ with values in $\mathbb{R}^{2N}$ where, at every time $t\geq0$,  $X^N(t)=\left(X^N_1(t),\dots,X^N_N(t)\right)$ is the vector of the spins of the $N$ particles and
 $\lambda^N= \left(\lambda^N_1(t),\dots,\lambda^N_N(t)\right)$ is the vector of the ``perceived  magnetizations''. %of each particle.
 The Markov process $(X^N,\lambda^N)$ has infinitesimal generator 
\small \begin{align}\label{inf_gen_N}
\mathcal{L}^Nf(x,\lambda)&=\sum_{i=1}^N\left[ \frac{1}{2}\left(\beta g'(\lambda_i)+\frac{\rho'(x_i)}{\rho(x_i)}\right)\frac{\partial}{\partial x_i}f(x,\lambda)+\frac{1}{2}\frac{\partial^2}{\partial x_i^2}f(x,\lambda)\right.\\\nonumber
&+\frac{1}{2N}\sum_{j=1}^N\frac{\partial^2}{\partial x_i\partial \lambda_j}f(x,\lambda)+\left(\frac{1}{2N}\sum_{j=1}^N\left(\beta g'(\lambda_j)+\frac{\rho'(x_j)}{\rho(x_j)}\right)-\alpha \lambda_i\right)\frac{\partial}{\partial \lambda_i}f(x,\lambda)\\\nonumber
&\left.+\frac{D}{2}\frac{\partial^2}{\partial \lambda_i^2}f(x,\lambda)+\frac{1}{2N}\sum_{j=1}^N\frac{\partial^2}{\partial x_j\partial \lambda_i}f(x,\lambda)+\frac{1}{2N}\sum_{j=1}^N\frac{\partial^2}{\partial \lambda_j\partial \lambda_i}f(x,\lambda)\right].
\end{align}
%\mathcal{L}^Nf(x,\lambda)=\sum_{i=1}^N\left[ \frac{1}{2}\left(\beta g'(\lambda_i){+}\frac{\rho'(x_i)}{\rho(x_i)}\right)\frac{\partial}{\partial x_i}f(x,\lambda)+\frac{1}{2}\frac{\partial^2}{\partial x_i^2}f(x,\lambda)\right.\\\left.+\left(\frac{1}{2N}\sum_{j=1}^N\left(\beta g'(\lambda_j){+}\frac{\rho'(x_j)}{\rho(x_j)}\right)-\alpha \lambda_i\right)\frac{\partial}{\partial \lambda_i}f(x,\lambda)+\frac{D}{2}\frac{\partial^2}{\partial \lambda_i^2}f(x,\lambda)\right].\end{multline}
%\subsection*
We associate to~\eqref{lambda} the nonlinear Markov process described in~\eqref{nonlinear_limit}, which is commonly called the McKean-Vlasov limit of the particle system~\eqref{lambda}, see \cite{Szn91} and reference therein. In the following theorems we prove well-posedness of the nonlinear process and the so-called property of \emph{propagation of chaos} of the particle system~\eqref{lambda}.
\begin{theorem}\label{existenceuniqueness}
The nonlinear process \eqref{nonlinear_limit} is well-defined, i.e. there exists a unique strong solution for all square-integrable initial condition $(X_0,\lambda_0)$ $\in$ $\mathbb{R}^2$.
\end{theorem}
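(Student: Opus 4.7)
The plan is to apply Sznitman's classical fixed-point scheme for McKean--Vlasov SDEs. Fix a horizon $T>0$, and let $\mathcal{M}_T$ denote the space of continuous flows $\mu\colon[0,T]\to\mathcal{P}_2(\mathbb{R}^2)$ with prescribed marginal at $t=0$, equipped with the weighted Wasserstein metric $d_\gamma(\mu,\nu):=\sup_{t\le T} e^{-\gamma t}\,W_2(\mu_t,\nu_t)$, for a constant $\gamma>0$ to be chosen later. I would define the map $\Phi\colon\mathcal{M}_T\to\mathcal{M}_T$ that sends $\mu$ to the law of the solution obtained by freezing the interaction term in \eqref{nonlinear_limit}; a strong solution of \eqref{nonlinear_limit} is exactly a fixed point of $\Phi$.

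The decisive structural observation is that the nonlinear coupling enters only the $\lambda$-equation, so the frozen system decouples. First solve the linear SDE $d\lambda^\mu_t=-\alpha\lambda^\mu_t\,dt+b_\mu(t)\,dt+D\,dB^2_t$ with $b_\mu(t):=\langle\mu_t,\tfrac{\beta}{2}g'(l)+\tfrac{1}{2}\rho'(x)/\rho(x)\rangle$; this integral is finite because Assumption~\ref{assumption_gen}(ii) gives linear growth of $g'$ and setting $y=0$ in Assumption~\ref{assumption_gen}(iii) yields linear growth of $\rho'/\rho$, so $b_\mu$ is integrable against any measure with bounded second moment. One obtains an explicit Ornstein--Uhlenbeck-type formula for $\lambda^\mu_t$. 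Substituting into the $X$-equation produces an SDE with Lipschitz exogenous drift $g'(\lambda^\mu_t)$ and a monotone drift $\rho'/\rho$ satisfying the dissipativity bound in Assumption~\ref{assumption_gen}(iii); classical results for SDEs with monotone coefficients then deliver a unique strong solution $X^\mu$ together with a uniform bound $\sup_{t\le T}\mathbb{E}[|X^\mu_t|^2+|\lambda^\mu_t|^2]<\infty$ obtained via It\^o on $x^2+l^2$.

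For the contraction estimate I would couple two inputs $\mu,\nu$ with the same Brownian motion and initial condition. The $\lambda$-difference reads $\lambda^\mu_t-\lambda^\nu_t=\int_0^t e^{-\alpha(t-s)}(b_\mu(s)-b_\nu(s))\,ds$, and the Lipschitz property of $g'$ together with the $C^1$-regularity of $\rho'/\rho$ (from $\log\rho\in C^2$) combined with the uniform moment bound on $\mu,\nu$ yield $|b_\mu(s)-b_\nu(s)|\le C\,W_2(\mu_s,\nu_s)$. For the $X$-difference, It\^o applied to $(X^\mu_t-X^\nu_t)^2$ leaves a Gr\"onwall-type inequality driven by $|\lambda^\mu_s-\lambda^\nu_s|^2$, because the bilinear $\rho'/\rho$ terms are absorbed favorably thanks to the one-sided Lipschitz bound of Assumption~\ref{assumption_gen}(iii). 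Combining the two estimates yields $W_2(\Phi(\mu)_t,\Phi(\nu)_t)^2\le C\int_0^t W_2(\mu_s,\nu_s)^2\,ds$, so $\Phi$ is a strict contraction on $(\mathcal{M}_T,d_\gamma)$ for $\gamma$ large, and Banach's theorem furnishes the unique strong solution; extending from $[0,T]$ to $\mathbb{R}_{\ge 0}$ is standard by concatenation.

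I expect the principal obstacle to be the merely one-sided Lipschitz character of the self-interaction drift $\rho'/\rho$. This complicates both the well-posedness of the frozen $X$-equation (handled by the monotone-SDE machinery rather than by elementary Picard iteration) and the comparison step (where one has to exploit the monotonicity cancellation rather than a naive Lipschitz bound). A related technical nuisance is that the contraction argument needs $\Phi$ to be a self-map on a subset of $\mathcal{M}_T$ with uniform second-moment bounds; establishing this requires an a priori moment estimate propagated along the iteration, using Assumption~\ref{assumption_gen}(iii) with $y=0$ and Gr\"onwall before the contraction itself is invoked.
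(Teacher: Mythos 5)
Your overall architecture (freeze the measure, exploit that the frozen $\lambda$-equation is an exogenously driven linear SDE, couple two inputs with the same Brownian motion, contract in a Wasserstein metric over path laws) is the same as the paper's, and your use of the one-sided Lipschitz bound of Assumption~\ref{assumption_gen}(iii) in the It\^o step for $(X^\mu_t-X^\nu_t)^2$ also matches. The problem is concentrated exactly where you place the estimate $|b_\mu(s)-b_\nu(s)|\le C\,W_2(\mu_s,\nu_s)$: that estimate does not follow from the hypotheses. Assumption~\ref{assumption_gen}(iii) gives only a \emph{one-sided} Lipschitz bound on $\rho'/\rho$; taking $y=0$ yields $x\,\rho'(x)/\rho(x)\le K(1+x^2)$, which is an upper bound only and does not imply linear growth of $|\rho'/\rho|$ (take $\rho(x)\propto e^{-x^4/4}$, so that $\rho'(x)/\rho(x)=-x^3$). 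For such a density the functional $\mu\mapsto\langle\mu,\rho'(x)/(2\rho(x))\rangle$ is not Lipschitz with respect to $W_2$ on the set of measures with bounded second moment --- it need not even be finite there --- so the central contraction inequality of your scheme fails, and with it the Banach fixed-point argument. (The bound does hold in the Gaussian case, where $\rho'/\rho=-x/\sigma^2$, but the theorem is stated under the general Assumption~\ref{assumption_gen}.)

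The paper avoids this by never estimating $b_\mu-b_\nu$ through a Wasserstein--Lipschitz bound at all. Because the drift of $\lambda$ is, by construction, exactly the drift of the mean of $X$ under the frozen law, along the coupling one has $\langle Q^1_t-Q^2_t,\frac{\beta}{2}g'(l)+\frac{\rho'(x)}{2\rho(x)}\rangle=\frac{d}{dt}\mathbf{E}\left[X^1_t-X^2_t\right]$, and an integration by parts in the variation-of-constants formula gives
\begin{equation*}
\lambda^1_t-\lambda^2_t=\mathbf{E}\left[X^1_t-X^2_t\right]-\alpha\int_0^t\mathbf{E}\left[X^1_s-X^2_s\right]e^{-\alpha(t-s)}\,ds,
\end{equation*}
which is controlled by $\mathbf{E}\left[\sup_{s\le t}(X^1_s-X^2_s)^2\right]^{1/2}$ with no regularity of $\rho'/\rho$ beyond the one-sided bound you already invoke in the It\^o step. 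If you replace your estimate on $b_\mu-b_\nu$ by this identity, the rest of your argument (weighted metric or the equivalent $C^n/n!$ Cauchy estimate, Gr\"onwall, concatenation in time) goes through and coincides in substance with the paper's proof.
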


\begin{theorem}\label{Thm:prop_chaos} Let $(X^N(t),\lambda^N(t))_{t\geq0}$ be the Markov process with generator \eqref{inf_gen_N} starting from i.i.d. initial conditions $Law((X^N_i,\lambda^N_i))=\mu_0$ on $\mathbb{R}^2$, where $\int_{\mathbb{R}^2}(x^2+\lambda^2)\mu_0(dx,d\lambda)<\infty$, and denote with $P^N$ its law on $\mathbf{C}([0,T],\mathbb{R}^{2N})$. Let $(X(t),\lambda(t))_{t\geq0}$ be the solution to \eqref{nonlinear_limit} with initial condition $\mu_0$, and denote with $\mu$ its law on $\mathbf{C}([0,T],\mathbb{R}^2)$. Then, the sequence $(P^N)_{N\in \mathbb{N}}$ is $\mu$-chaotic.
\end{theorem}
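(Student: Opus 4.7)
The proof plan follows the classical synchronous coupling argument of Sznitman~\cite{Szn91}. On the same probability space that carries the $N$-particle system~\eqref{lambda}, construct a family $(\bar X_i,\bar\lambda_i)_{i\geq 1}$ of i.i.d.\ copies of the nonlinear process~\eqref{nonlinear_limit}, each one driven by the pair $(B^{i,1},B^{i,2})$ already used for the $i$-th coordinate of~\eqref{lambda} and started from $(X^N_i(0),\lambda^N_i(0))$. By Theorem~\ref{existenceuniqueness} each $(\bar X_i,\bar\lambda_i)$ is well-defined with path-space law $\mu$. Sznitman's equivalence between $\mu$-chaoticity and the convergence in probability of the empirical measure $\mu^N_T:=N^{-1}\sum_i\delta_{(X^N_i,\lambda^N_i)}$ to the deterministic limit $\mu$ (see~\cite{Szn91}) reduces the theorem to the single-particle estimate
\[
\lim_{N\to\infty}\mathbb{E}\Bigl[\sup_{t\leq T}\bigl(|X^N_1(t)-\bar X_1(t)|^2+|\lambda^N_1(t)-\bar\lambda_1(t)|^2\bigr)\Bigr]=0,
\]
which, thanks to exchangeability, is also the right quantity to combine with a second-moment bound on $\mu^N_T$ tested against bounded continuous functions.

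Setting $\Delta^X_i:=X^N_i-\bar X_i$ and $\Delta^\lambda_i:=\lambda^N_i-\bar\lambda_i$, the plan is to apply Itô's formula to $(\Delta^X_i)^2+(\Delta^\lambda_i)^2$. The $B^{i,1},B^{i,2}$-martingale parts cancel exactly because of the synchronous coupling, while the mean-field noise $N^{-1}\sum_j dB^{j,1}$ appearing in $dm^N$ contributes a martingale of quadratic variation of order $t/N$. The drift contribution is handled using Assumption~\ref{assumption_gen}: condition (ii) controls every $g'$ difference by the Lipschitz constant $L$, and condition (iii) bounds the dissipative inner product $\Delta^X_i\bigl(\rho'(X^N_i)/\rho(X^N_i)-\rho'(\bar X_i)/\rho(\bar X_i)\bigr)$. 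The genuinely mean-field term is the interaction appearing in $d\Delta^\lambda_i$, namely
\[
\frac{1}{N}\sum_{j=1}^N F(X^N_j,\lambda^N_j)-\bigl\langle\mu_t,F\bigr\rangle,\qquad F(x,l):=\frac{\beta}{2}g'(l)+\frac{\rho'(x)}{2\rho(x)},
\]
which I split as $N^{-1}\sum_j\bigl[F(X^N_j,\lambda^N_j)-F(\bar X_j,\bar\lambda_j)\bigr]+R_N(t)$, with $R_N(t):=N^{-1}\sum_j F(\bar X_j,\bar\lambda_j)-\langle\mu_t,F\rangle$. The first sum is bounded, by Cauchy-Schwarz, assumptions (ii)-(iii) and exchangeability, by a constant times $\mathbb{E}\bigl[(\Delta^X_1)^2+(\Delta^\lambda_1)^2\bigr]$, exactly the quantity one wants to close a Gronwall argument upon.

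The main obstacle is the residual fluctuation $R_N(t)$, which for each fixed $t$ is a centred sum of i.i.d.\ random variables of the \emph{unbounded} observable $F$. I would show $\sup_{t\leq T}\mathbb{E}[R_N(t)^2]\leq C(T)/N$ by combining uniform-in-time second-moment bounds on $(\bar X_i,\bar\lambda_i)$, obtainable along the same lines as the proof of Theorem~\ref{existenceuniqueness}, with the integrability condition~\eqref{integrabilita_g}, which guarantees $\mathrm{Var}(F(\bar X_1,\bar\lambda_1))<\infty$ uniformly in $t\in[0,T]$. Inserting all these bounds into the Itô expansion, applying Burkholder-Davis-Gundy to the $O(1/N)$ martingale term and Gronwall's inequality to the resulting differential inequality yields
\[
\mathbb{E}\Bigl[\sup_{t\leq T}\bigl((\Delta^X_1)^2+(\Delta^\lambda_1)^2\bigr)\Bigr]\leq C_T\,N^{-1},
\]
which by the equivalence recalled above concludes the proof of $\mu$-chaoticity of $(P^N)_{N\in\mathbb{N}}$.
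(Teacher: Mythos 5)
Your overall architecture (synchronous coupling with i.i.d.\ copies of the nonlinear process, reduction of chaoticity to convergence of the empirical measure, Gronwall) is exactly the paper's, but there is a genuine gap in the step where you bound the mean-field drift difference
\[
\frac{1}{N}\sum_{j=1}^N\bigl[F(X^N_j,\lambda^N_j)-F(\bar X_j,\bar\lambda_j)\bigr],\qquad F(x,l)=\frac{\beta}{2}g'(l)+\frac{\rho'(x)}{2\rho(x)},
\]
``by Cauchy--Schwarz and assumptions (ii)--(iii)'' in terms of $\mathbb{E}\bigl[(\Delta^X_1)^2+(\Delta^\lambda_1)^2\bigr]$. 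Assumption~\ref{assumption_gen}~(iii) is only a \emph{one-sided} (semi-monotonicity) condition, $(x-y)\bigl(\tfrac{\rho'(x)}{\rho(x)}-\tfrac{\rho'(y)}{\rho(y)}\bigr)\leq K(1+(x-y)^2)$; it does not give $\bigl|\tfrac{\rho'(x)}{\rho(x)}-\tfrac{\rho'(y)}{\rho(y)}\bigr|\leq C|x-y|$. For instance $\rho\propto e^{-x^4}$ satisfies (iii) with $\rho'/\rho=-4x^3$, which is not Lipschitz. The one-sided condition suffices in the $X$-equation, where the difference of $\rho'/\rho$ is paired with $\Delta^X_i$ itself, but in the $\lambda$-equation it is paired with $\Delta^\lambda_i$ and no sign structure is available, so Cauchy--Schwarz genuinely requires a two-sided Lipschitz bound that the assumptions do not provide. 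The same issue infects your treatment of the fluctuation $R_N(t)$: $\mathrm{Var}(F(\bar X_1,\bar\lambda_1))<\infty$ requires $\mathbb{E}[(\rho'(\bar X)/\rho(\bar X))^2]<\infty$, which does not follow from square-integrability of the paths nor from~\eqref{integrabilita_g} (that condition constrains $g$, not $\rho'/\rho$).

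The paper's proof avoids both problems with a structural trick you are missing: since $\frac{1}{N}\sum_j\bigl(\frac{\beta}{2}g'(\lambda^N_j)+\frac{\rho'(X^N_j)}{2\rho(X^N_j)}\bigr)\,dt=dm^N(t)-\frac{1}{N}\sum_j dB^{1,j}_t$ and, on the limit side, $\langle\mu_t,F\rangle=\frac{d}{dt}\mathbf{E}[\bar X_t]$, one can integrate by parts in the explicit solution formula for the linear $\lambda$-equation and rewrite $\lambda^{N,i}_t-\bar\lambda^{N,i}_t$ entirely in terms of $\frac{1}{N}\sum_j X^{N,j}_s-\mathbf{E}[\bar X_s]$. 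This quantity splits into $\frac{1}{N}\sum_j(X^{N,j}_s-\bar X^{N,j}_s)$, which closes the Gronwall loop using only the $X$-differences, plus $\frac{1}{N}\sum_j\bar X^{N,j}_s-\mathbf{E}[\bar X_s]$, which is controlled by the standard quantitative law of large numbers $\mathbf{E}[D_{2,T}(\bar\mu^N,\mu)]\leq\beta(N)\to 0$ and needs only second moments. In short: your plan works under an additional global Lipschitz hypothesis on $\rho'/\rho$ (e.g.\ the Gaussian case), but to prove the theorem under Assumption~\ref{assumption_gen} as stated you need to eliminate the empirical average of $\rho'/\rho$ from the $\lambda$-equation via the magnetization identity rather than estimate it directly.
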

The proofs of Theorem~\ref{existenceuniqueness} and~\ref{Thm:prop_chaos} follow a standard approach via pathwise estimates. To this aim, let us introduce a suitable distance on the space of trajectories. We fix a time $T>0$ and we define the $W_2$ Wasserstein distance on the set $\mathcal{M}^2(\mathbf{C}([0,T],\mathbb{R}^2))$ of square-integrable measures: for all $\mu,\nu$~$\in$~$\mathcal{M}^2(\mathbf{C}([0,T],\mathbb{R}^2))$ 
{\small \begin{multline}
D_{2,T}(\mu,\nu)^2=\inf\left\{\int \sup_{t\in[0,T]}\|x(s)-y(s)\|^2m(dx,dy), \right.\\
\left.\text{ with }m\in\mathcal{M}^2(\mathbf{C}([0,T],\mathbb{R}^2)\times \mathbf{C}([0,T],\mathbb{R}^2)), \pi_1\circ m=\mu,\, \pi_2\circ m=\nu \right\}.
\end{multline}  }

\begin{proof}[Proof of Theorem~\ref{existenceuniqueness}]
Given any square-integrable law $\mu_0$ on $\mathbb{R}^2$, we define a map $\Gamma$ that associates to a measure $Q$ $\in$ $\mathcal{M}^2(\mathbf{C}([0,T],\mathbb{R}^2))$ the law of the solution $\{(X_t,\lambda_t)\}_{t\in[0,T]}$ of the \mbox{SDE}
\begin{equation}\label{map}
\left\{\begin{array}{l}
dX_t=\frac{\beta}{2} g'(\lambda_t)dt+\frac{\rho'(X_t)}{2\rho(X_t)}dt+dB^1_t\\
d\lambda_t=-\alpha \lambda_t dt+\langle Q_t(dx,dl),\frac{\beta}{2} g'(l){+}\frac{\rho'(x)}{2\rho(x)} \rangle dt+DdB^2_t,
\end{array}
\right.
\end{equation}
that, for $\mu_0$ initial condition, admits a unique strong solution for classical results, see \cite{IkWa14}; of course a solution to \eqref{nonlinear_limit} is a fixed point of $\Gamma$. We use a coupling argument to prove existence (via a Picard iteration) and uniqueness of the fixed point of $\Gamma$. Let us start with the proof of uniqueness, if $Q^1$ and $Q^2$ are two fixed point of $\Gamma$, i.e. two measures in $\mathcal{M}^2(\mathbf{C}([0,T],\mathbb{R}^2))$ such that $Q^1=\Gamma(Q^1)$ and $Q^2=\Gamma(Q^2)$. We couple them as follows. Let $(\Omega,\mathcal{F},\{\mathcal{F}_t\}_{t\in[0,T]},\mathbf{P})$ be a filtered probability space and $\{B_t\}_{t\in[0,T]}$ a two-dimensional Brownian motion. Then we write 
\begin{equation}\label{X1L1}
\left\{\begin{array}{l}
dX^1_t=\frac{\beta}{2} g'(\lambda^1_t)dt{+}\frac{\rho'(X^1_t)}{2\rho(X^1_t)}dt+dB^1_t\\
d\lambda^1_t=-\alpha \lambda^1_t dt+\langle Q^1_t(dx,dl),\frac{\beta}{2} g'(l){+}\frac{\rho'(x)}{2\rho(x)} \rangle dt+DdB^2_t,
\end{array}
\right.
\end{equation}
and \begin{equation}\label{X2L2}
\left\{\begin{array}{l}
dX^2_t=\frac{\beta}{2} g'(\lambda^2_t)dt{+}\frac{\rho'(X^2_t)}{2\rho(X^2_t)}dt+dB^1_t\\
d\lambda^2_t=-\alpha \lambda^2_t dt+\langle Q^2_t(dx,dl),\frac{\beta}{2} g'(l){+}\frac{\rho'(x)}{2\rho(x)} \rangle dt+DdB^2_t,
\end{array}
\right.
\end{equation}
where the initial conditions are $(X^1_0,\lambda^1_0)=(X^2_0,\lambda^2_0)$ a.s., $\mu_0$ distributed. We estimate the distance between $Q^1$ and $Q^2$ by means of the above coupling, i.e.
$$
D_{2,T}(Q^1,Q^2)\leq\sqrt{\mathbf{E}\left[ \sup_{t\in[0,T]}(X^1_t-X^2_t)^2+(\lambda^1_t-\lambda^2_t)^2\right]}.$$
The \mbox{SDE} for $\lambda^1$ and $\lambda^2$ is linear, then we write explicitly
$$
\lambda^1_t-\lambda^2_t=\int_0^te^{\alpha(s-t)} \langle Q^1_s(dx,dl)-Q^2_s(dx,dl),\frac{\beta}{2} g'(l){+}\frac{\rho'(x)}{2\rho(x)} \rangle ds, 
$$
but notice that $\langle Q^1_t(dx,dl)-Q^2_t(dx,dl),\frac{\beta}{2} g'(l){+}\frac{\rho'(x)}{2\rho(x)} \rangle=\frac{d}{dt}\mathbf{E}\left[X^1_t-X^2_t\right]$, that gives
$$
\lambda^1_t-\lambda^2_t=\mathbf{E}\left[X^1_t-X^2_t\right]-\alpha\int_0^t\mathbf{E}\left[X^1_s-X^2_s\right]e^{-\alpha(t-s)}ds.
$$
We use Ito's formula to obtain
$$
(X^1_t-X^2_t)^2=2\int_0^t(X^1_s-X^2_s)\left( \frac{\beta}{2}g'(\lambda^1_s)- \frac{\beta}{2}g'(\lambda^1_s){+}\frac{\rho'(X^1_s)}{2\rho(X^1_s)}{-}\frac{\rho'(X^2_s)}{2\rho(X^2_s)}\right) ds.
$$
Therefore, there exists $C_T$ such that
{\small
$$
\mathbf{E}\left[ \sup_{t\in[0,T]}(X^1_t-X^2_t)^2+(\lambda^1_t-\lambda^2_t)^2\right]\leq C_T\int_0^T\mathbf{E}\left[ \sup_{t\in[0,s]}(X^1_t-X^2_t)^2+(\lambda^1_t-\lambda^2_t)^2\right]ds,
$$}
and by Gronwall Lemma this gives $D_{2,T}(Q^1,Q^2)=0$. With a Picard iteration of the type $Q^{n}=\Gamma(Q^{n-1})$ and with the above arguments, we obtain that 
\begin{multline}
\mathbf{E}\left[ \sup_{t\in[0,T]}(X^{n}_t-X^{n-1}_t)^2+(\lambda^n_t-\lambda^{n-1}_t)^2\right]\\
\leq L\int_0^T\mathbf{E}\left[ \sup_{t\in[0,s]}(X^n_t-X^{n-1}_t)^2+(\lambda^n_t-\lambda^{n-1}_t)^2\right]ds\\
+\alpha^2 T\int_0^TD_{2,s}(Q^{n-1},Q^{n-2})^2ds,
\end{multline}
that gives $D_{2,T}(Q^{n},Q^{n-1})^2\leq \frac{(e^{LT}T\alpha^2)^n}{n!}\int_0^TD_{2,s}(Q^{1},Q^{0})^2ds$, i.e. $\{Q^n\}_{n\in\mathbb{N}}$ is a Cauchy sequence for $D_{2,T}$ and therefore for a weaker, but complete, metric on $\mathcal{M}^2(\mathbf{C}([0,T],\mathbb{R}^2))$.

\end{proof}

\begin{proof}[Proof of Theorem~\ref{Thm:prop_chaos}] By the exchangeability of the components and of the dynamics, it is well-known that the thesis is implied by 
$
\mathbf{E}\left[D_{2,T}(\mu^N,\mu)\right]{\longrightarrow} 0
$
as $N\to+\infty$, where $\mu^N={1\over N}\sum_{i=1}^N \delta_{(X^N,\lambda^N)}$ (see \cite{Szn91}). Therefore, we apply this approach to prove the theorem. Fix a filtered probability space $(\Omega,\mathcal{F},\{\mathcal{F}_t\}_{t\in[0,T]},\mathbf{P})$, for any $N\in\mathbb{N}$, take a $2N$-dimensional Brownian motion $\{B_t\}_{t\in[0,T]}$ and consider the coupled processes given by 
{\small \begin{equation}
\label{microscopic1} \left\{\begin{array}{l}
dX^{N,i}_t=\frac{\beta}{2} g'(\lambda^{N,i}_t)dt+\frac{\rho'(X^{N,i}_t)}{2\rho(X^{N,i}_t)}dt+dB^{1,i}_t\\
d\lambda^{N,i}_t=-\alpha \lambda^{N,i}_t dt+{1\over N}\sum_{j=1}^N \left(\frac{\beta}{2} g'(\lambda^{N,j}_t)+\frac{\rho'(X^{N,j}_t)}{2\rho(X^{N,j}_t)}\right) dt+DdB^{2,i}_t+\frac{1}{N}\sum_{j=1}^NdB^{1,j}_t,
\end{array}
\right.
\end{equation}}
$i=1,\dots,N$, and
\begin{equation*}
\label{microscopic2}\left\{\begin{array}{l}
d\bar{X}^{N,i}_t=\frac{\beta}{2} g'(\bar{\lambda}^{N,i}_t)dt+\frac{\rho'(\bar{X}^{N,i}_t)}{2\rho(\bar{X}^{N,i}_t)}dt+dB^{1,i}_t\\
d\bar{\lambda}^{N,i}_t=-\alpha \bar{\lambda}^{N,i}_t dt+\langle \mu_t(dx,dl),\frac{\beta}{2} g'(l)+\frac{\rho'(x)}{2\rho(x)} \rangle dt+DdB^{2,i}_t,
\end{array}
\right.
\end{equation*}
$i=1,\dots,N$, where the initial conditions are $(X^{N,i}_0,\lambda^{N,i}_0)=(\bar{X}^{N,i}_0,\bar{\lambda}^{N,i}_0)$ a.s., $\mu_0^{\otimes N}$ distributed.
 Similarly to the proof of Theorem \ref{existenceuniqueness}, we write explicitly, for all $i=1,\dots,N$
{\small $$
\lambda^{N,i}_t-\bar {\lambda}^{N,i}_t=\int_0^te^{\alpha(s-t)} \langle \mu^N_s(dx,dl)-\mu_s(dx,dl),\frac{\beta}{2} g'(l)+\frac{\rho'(x)}{2\rho(x)} \rangle ds+\frac{1}{N}\sum_{j=1}^N\int_0^te^{\alpha(s-t)}dB^{1,i}_s.
$$}
However, from~\eqref{microscopic1}, we see that, a.s. it holds
{\small $$
\int_0^te^{\alpha(s-t)} \langle \mu^N_s(dx,dl),\frac{\beta}{2} g'(l)+\frac{\rho'(x)}{2\rho(x)} \rangle ds=\int_0^te^{\alpha(s-t)} d\langle \mu^N_s(dx,dl), x \rangle -\frac{1}{N}\sum_{j=1}^N\int_0^te^{\alpha(s-t)}dB^{1,i}_s,
$$}
and, consequently, we write  
\begin{align*}
\lambda^{N,i}_t-\bar{\lambda}^{N,i}_t=&\frac{1}{N}\sum_{j=1}^NX^{N,j}_t-\mathbf{E}\left[\bar{X}^{N,i}_t\right]-e^{-\alpha t}\left(\frac{1}{N}\sum_{j=1}^NX^{N,j}_0-\mathbf{E}\left[\bar{X}^{N,i}_0\right]\right)\\
&-\alpha\int_0^t\left(\frac{1}{N}\sum_{j=1}^NX^{N,j}_s-\mathbf{E}\left[\bar{X}^{N,i}_s\right]\right)e^{-\alpha(t-s)}ds.
\end{align*}
Now  let $\bar{\mu}^N={1\over N} \sum_{j=1}^N \delta_{(\bar{X}^{N,j},\bar{\lambda}^{N,j})}$; it is easy to see that, as in the proof of Theorem \ref{existenceuniqueness}, it holds
\begin{align*}
\mathbf{E}\left[D_{2,T}(\mu^N,\bar{\mu}^N)^2\right]\leq& \mathbf{E}\left[ \sup_{t\in[0,T]}(X^{N,1}_t-\bar{X}^{N,1}_t)^2+(\lambda^{N,1}_t-\bar{\lambda}^{N,1}_t)^2\right]\\
\leq & L\int_0^T\mathbf{E}\left[ \sup_{t\in[0,s]}(X^{N,1}_t-\bar{X}^{N,1}_t)^2+(\lambda^{N,1}_t-\bar{\lambda}^{N,1}_t)^2\right]ds\\
&+\alpha^2 T \int_0^T\mathbf{E}\left[D_{2,s}(\mu^N,\mu)^2\right]ds,
\end{align*}
which, by an application of Gronwall's Lemma, implies that there exists $C_T>0$ such that
\begin{equation}\label{steppropagation}
\mathbf{E}\left[D_{2,T}(\mu^N,\bar{\mu}^N)^2\right]\leq C_T \int_0^T\mathbf{E}\left[D_{2,s}(\mu^N,\mu)^2\right]ds.
\end{equation}
 Moreover,  it is well known that $\mathbf{E}\left[D_{2,T}(\bar{\mu}^N,\mu)\right]\leq \beta(N)$ for some sequence $\beta(N)$ such that $\lim_{N\rightarrow \infty}\beta(N)=0$. Then, using \eqref{steppropagation}, we have 
\begin{align*}
\mathbf{E}\left[D_{2,T}(\mu^N,\mu)^2\right]\leq& \mathbf{E}\left[D_{2,T}(\mu^N,\bar{\mu}^N)^2\right]+\mathbf{E}\left[D_{2,T}(\bar{\mu}^N,\mu)^2\right]\\ \leq & C_T \int_0^T\mathbf{E}\left[D_{2,s}(\mu^N,\mu)^2\right]ds+  \beta(N)\leq K_T\beta(N) 
\end{align*}
for some $K_T>0$, which concludes the proof.

\end{proof}

\textit{Acknowledgement.}  The authors wish to thank Paolo~Dai~Pra for having suggested the idea of this work and for all the useful discussions on it and Marco Formentin for the suggestions and comments. This work was partially supported by the INdAM-GNAMPA Project 2017~``Collective periodic behavior in interacting particle systems''. L.~A. acknowledges the partial support by Centro Studi Levi Cases (Universit\`a di Padova).
\bibliographystyle{abbrv}
\bibliography{Bibliography}

\end{document}